\newcommand{\rmnum}[1]{\romannumeral #1}
\newcommand{\Rmnum}[1]{\expandafter\@slowromancap\romannumeral #1@}
\def\@seccntDot{.}
\def\@seccntformat#1{\csname the#1\endcsname\@seccntDot\hskip 0.5em}
\renewcommand\section{\@startsection{section}{1}{\z@}%
{18\p@ \@plus 6\p@ \@minus 3\p@}%
{9\p@ \@plus 6\p@ \@minus 3\p@}%
{\large\bfseries\boldmath}}
\renewcommand\subsection{\@startsection{subsection}{2}{\z@}%
{12\p@ \@plus 6\p@ \@minus 3\p@}%
{3\p@ \@plus 6\p@ \@minus 3\p@}%
{\bfseries\boldmath}}
\renewcommand\subsubsection{\@startsection{subsubsection}{3}{\z@}%
{12\p@ \@plus 6\p@ \@minus 3\p@}%
{\p@}%
{\bfseries\boldmath}}
\theoremstyle{plain}
\newtheorem{theorem}{Theorem}[section]
\newtheorem{lemma}{Lemma}[section]
\newtheorem{conjecture}{Conjecture}[section]
\theoremstyle{definition}
\newtheorem{definition}{Definition}[section]
\newtheorem{remark}{Remark}[section]
\newtheorem{claim}{Claim}
\numberwithin{equation}{section}
\title{Uniquely $C_{4}^{+}$-saturated graphs \thanks{Email addresses: \texttt{yuyingli1205@163.com} (Y. Li),
 \texttt{kexxu1221@126.com} (K. Xu), \texttt{gerbner.daniel@renyi.hu} (D. Gerbner), \texttt{wzhliu7502@nuaa.edu.cn} (W. Liu).}}
\author {Yuying Li $^{a}$,  ~~ Kexiang Xu $^{a}$, ~~D\'{a}niel Gerbner $^{b}$, ~~ Wenzhong Liu $^{a}$  \\[3mm]
 \small $^{a}$  School of Mathematics, Nanjing University of Aeronautics and Astronautics,\\
 \small Nanjing, Jiangsu 210016, PR China\\
\small $^b$ Alfr\'{e}d R\'{e}nyi Institute of Mathematics, HUN-REN }
\date{}
\begin{document}
\maketitle

\begin{abstract}
A  graph $G$ is  \textit{uniquely $H$-saturated} if it contains no copy of  a graph $H$ as a subgraph,  but adding any new edge into $G$ creates exactly one copy of $H$. Let $C_{4}^{+}$ be the diamond graph consisting of a $4$-cycle $C_{4}$ with one chord and  $C_{3}^{*}$  be the  graph  consisting of a triangle with a pendant edge.  In this paper  we  prove that a nontrivial uniquely $C_{4}^{+}$-saturated graph $G$  has girth $3$  or  $4$.  Further,  $G$  has  girth $4$  if and only if it  is a  strongly regular graph  with  special parameters.
For $n>18k^{2}-24k+10$ with $k\geq2$,   there are no uniquely $C_{4}^{+}$-saturated graphs  on  $n$  vertices  with  $k$ triangles.   In  particular,  $C_{3}^{*}$  is  the only nontrivial uniquely $C_{4}^{+}$-saturated graph with  one triangle,  and there are no uniquely $C_{4}^{+}$-saturated graphs with  two,  three or four triangles.

\par\vspace{2mm}

\noindent{\bfseries Keywords:   graph saturation, uniquely saturated graph, strongly regular graph}
\par\vspace{1mm}

\noindent{\bfseries  AMS Classification (2020):  05C35, 05C07}
\end{abstract}

\section{Introduction}

All graphs considered in this paper are finite,  undirected and simple.  Let $G$ be a graph and as usual, we denote by  $V(G)$,  $E(G)$,  $|G|$, $e(G)$ and $\overline{G}$ the vertex set, edge set,   the number of vertices, the number of edges and the complement of $G$, respectively. For any $v\in V(G)$, let $N_{G}(v)$ be the neighborhood of $v$ in $G$ and  $d_{G}(v)=|N_G(v)|$ be the degree of $v$ in $G$,  which  are denoted by $N(v)$  and   $d(v)$  for short  if there is no ambiguity.
Moreover, $N_{G}[v]=N_{G}(v)\cup\{v\}$  is  the  close  neighborhood of $v$ in $G$.  For a vertex $v\in V(G)$  with $A\subseteq V(G)$,  let  $d_{A}(v)$  be the number of neighbors of $v$ in  $A$.
A vertex $v$ is a  \textit{universal vertex} of  $G$ if  $d_{G}(v)=|G|-1$.  As usual, let $\Delta(G)$ and $\delta(G)$ be the maximum degree and the minimum degree, respectively,  of $G$.  The  \textit{distance $d_{G}(x,y)$}  of two vertices $x$, $y$ is the length of a shortest $(x,y)$ path in $G$.  The greatest distance between any two vertices in $G$ is the \textit{diameter} of $G$, denoted by $diam(G)$.  The \textit{girth} $g(G)$ of a graph $G$ is the minimum length of cycles in it.  The graphs with diameter $d$ and girth $2d+1$ are the \textit{Moore graphs}.
 Denote by $K_{n}$, $P_{n}$,  $S_{n}$ and $C_{n}$ the complete graph, path,  star graph and cycle on $n$ vertices, respectively.
 A \textit{diametical path} of graph $G$ is a path with length $diam(G)$  in $G$.
 The double star $D_{s, t}$  is  the graph  formed  by  attaching $s$ pendant vertices to one endpoint of $K_{2}$, and  $t$ pendant vertices  to the  other endpoint of it. The  double star  is   balanced  if  $s=t$.
For $A\subseteq V(G)$, let $G[A]$   be the subgraph of $G$ induced by $A$.  Given $A, B \subseteq V(G)$, we denote by $E_{G}[A,B]$, or $E[A,B]$ for simplicity, the  edge set between $A$ and $B$ in $G$  and  its  cardinality will be written as $e_{G}[A,B]$ or $e[A,B]$ for short.  For any edge $e\in E(\overline{G})$ (here we call $e$ a non-edge of $G$),  we  write by $G+e$ the graph obtained from $G$ by adding the new edge $e$.  Given any two  vertex-disjoint graphs $G$ and $H$,  their  \textit{union}  $G\cup H$   is the graph with vertex set $V(G)\cup V(H)$ and edge set $E(G)\cup E(H)$  and  their  \textit{join}  $G\vee H$   is the graph with vertex set $V(G)\cup V(H)$ and edge set $E(G)\cup E(H)\cup\{gh : g\in V(G), h\in V(H)\}$.  Please refer to  \cite{A.Bondy2008}  for  more definitions and  notations on graph theory.

Given a graph $H$,  a graph $G$ is \textit{$H$-free} if $G$ does not contain $H$ as a  subgraph.
A  graph $G$  is   \textit{$H$-saturated} if $G$ is $H$-free,  but the addition of any edge missing from $G$ creates a copy of $H$ in the resultant graph.  Let $G$ be an $H$-free graph with a non-edge $e$,  we  say $e$ is an \textit{$H$-saturating edge} of $G$  if $G+e$ contains a copy of $H$.
The study of $H$-saturated graphs began since Mantel \cite{W.Mantel1907} proved that $K_{\lceil\frac{n}{2}\rceil,\lfloor\frac{n}{2}\rfloor}$ is the $K_{3}$-saturated graph on  $n$  vertices with most edges.  Tur\'{a}n \cite{P.Turan1941} generalized this  result by  proving that the  $K_{r}$-saturated graphs on  $n$ vertices  with  most edges are the complete  $(r-1)$-partite graph with partite sets as balanced as possible.   In the opposite direction,  Erd\H{o}s et al. \cite{P.Erdos1964} determined that the $K_{r}$-saturated graphs on  $n$ vertices  with  fewest edges   are   $K_{r-2}\vee\overline{K_{n-r+2}}$.
 Please see  an informative survey  \cite{B.L.Currie2021}  for some detailed results about the $H$-saturated graphs  with the fewest edges.

Given a graph $H$ and an $H$-saturated graph $G$,  we say that $G$ is \textit{uniquely $H$-saturated} if the addition of any edge to $G$ creates exactly one copy of $H$.  If $H$ has $t$ vertices, every complete graph with fewer than $t$  vertices is trivially  uniquely $H$-saturated, since there are no edges to be added into it. A uniquely $H$-saturated graph is \textit{nontrivial} if it has at least  $t$ vertices.
 Cooper,  Lenz,   LeSaulnier,    Wenger,  and  West  \cite{J.Cooper2012}  initiated the study of uniquely saturated graphs by determining all uniquely $C_{k}$-saturated graphs  for  $k\in\{3, 4\}$.
 Wenger and West \cite{P.S.Wenger2016}  proved that  nontrivial  uniquely $C_{5}$-saturated graphs are precisely the friendship graphs and there are no  nontrivial  uniquely $C_{k}$-saturated graphs for $k\in\{6,7,8\}$.
For the path,  Wenger \cite{P.S.Wenger2010}  proved  that  no nontrivial uniquely $P_{k}$-saturated graphs exist  for $k\geq5$  and also characterized  nontrivial uniquely $P_{k}$-saturated graphs  for  $k\in\{2,3,4\}$.   Berman,  Chappell,  Faudree,  Gimbel and  Hartman \cite{L.W.Berman2016} determined that  the trees $T$ for which there exist an infinite number of uniquely $T$-saturated graphs are precisely  balanced double stars.   They conjectured that double stars are the only trees  $T$ for which nontrivial uniquely $T$-saturated graphs exist.
By now  few examples of uniquely $K_{r}$-saturated graphs were known, and little was known about their properties.
Hartke  and  Stolee  \cite{S.G.Hartke2012} considered uniquely $K_{r}$-saturated graphs  with  no universal vertex,  called $r$-primitive graphs.  They  found ten new $r$-primitive graphs with $4\leq r\leq7$  and  two new infinite families of uniquely $K_{r}$-saturated  Cayley graphs.    Gy\'{a}rf\'{a}s,  Hartkey   and  Viss  \cite{A.Gyarfas2018}  showed  that there exist no $r$-primitive graphs  on $n$ vertices for any   $n\leq2r-3$.

Let $C_{4}^{+}$ be the diamond graph consisting of a $4$-cycle $C_{4}$ with one chord and  $C_{3}^{*}$  be the  graph  consisting of a triangle with a pendant edge.  In this paper  we consider  uniquely  $C_{4}^{+}$-saturated graphs.   The paper is organized as follows.
In Section 2,   we give  some  structural properties  of nontrivial uniquely $C_{4}^{+}$-saturated graphs and  show  that  they  have girth $3$  or  $4$.
In Section 3,  we  prove  that $G$  is a uniquely $C_{4}^{+}$-saturated graph with girth $4$  if and only if  it  is a  strongly regular graph  with  special parameters.
For $n>18k^{2}-24k+10$  with $k\geq2$,  we get that  there are no uniquely $C_{4}^{+}$-saturated graphs  on  $n$  vertices  with  $k$ triangles.
In  particular, $C_{3}^{*}$  is the only nontrivial uniquely $C_{4}^{+}$-saturated graph with one triangle,  and  there are no uniquely $C_{4}^{+}$-saturated graphs  with  two,  three or four triangles.  We believe that there are no uniquely $C_{4}^{+}$-saturated graphs with more than four triangles, giving the following conjecture.

\begin{conjecture}
A graph $G$ is nontrivial uniquely $C_{4}^{+}$-saturated if and only if $G$ is a strongly regular graph with parameters $(n,k,0,2)$ or $G\cong C_{3}^{*}$.

\end{conjecture}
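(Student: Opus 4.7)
The plan is to establish the ``only if'' direction of the conjecture, since the ``if'' direction amounts to verifying that $C_{3}^{*}$ is uniquely $C_{4}^{+}$-saturated and combining this with the sufficiency half of the girth-$4$ characterization already proved in Section~3. Let $G$ be a nontrivial uniquely $C_{4}^{+}$-saturated graph. By the girth dichotomy from Section~2, $g(G) \in \{3,4\}$, and when $g(G) = 4$ we are done by Section~3. So assume $g(G) = 3$ and let $k$ denote the number of triangles of $G$. The cases $k \in \{1,2,3,4\}$ are already handled in the paper, with $C_{3}^{*}$ the only graph arising (and only when $k = 1$), so the remaining task is to exclude every $k \geq 5$; for each such $k$ the bound $n \leq 18k^{2} - 24k + 10$ reduces the problem to finitely many vertex counts.

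The core tool is a classification of how non-edges create copies of $C_{4}^{+} = K_{4} - e$. A case check shows that, for a non-edge $uv$ of $G$, a new copy of $C_{4}^{+}$ in $G + uv$ arises from a $4$-set $\{u,v,x,y\}$ in which exactly one of the edges $ux, uy, vx, vy, xy$ is missing in $G$. This splits into two essentially different families: either $xy$ is the missing edge, in which case $u, v$ share a pair of non-adjacent common neighbors $x,y$; or the missing edge is incident to $u$ or $v$, in which case there is a triangle of $G$ containing $v$, resp.\ $u$, of which $u$, resp.\ $v$, is adjacent to exactly one vertex. Enforcing ``exactly one such configuration'' over every non-edge produces both a global identity linking $k$, the degree sequence and the number of paths of length $2$ with non-adjacent endpoints, and very rigid local constraints near every triangle.

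Concretely, I would fix a triangle $T = \{a,b,c\}$ of $G$ and partition $V(G) \setminus T$ according to $|N(v) \cap T|$. The uniqueness condition then dictates a prescribed adjacency pattern on vertices in each part: for instance, if $N(w) \cap T = \{a\}$ and $w'$ is any non-neighbor of $w$ outside $T$, the unique $C_{4}^{+}$ in $G + ww'$ comes from the triangle $T$ and thereby tightly couples $N(w') \cap \{b,c\}$ to $N(w) \cap \{b,c\}$, while vertices with $|N(v) \cap T| \in \{2,3\}$ are even more constrained. A careful case analysis should bound the number of vertices at distance at most $2$ from $T$ by a linear function of $k$; combined with $n \leq 18k^{2} - 24k + 10$ and an extension of the elimination already used for $k \in \{2,3,4\}$, this should contradict the existence of $G$ for every $k \geq 5$.

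The principal obstacle is the interaction between distinct triangles. Two triangles that share a vertex or an edge, or that are joined by a short path, generate overlapping saturation constraints, and it is not obvious how to keep the contributions each pair makes to the uniqueness identity disjoint. I expect the proof will need to split according to whether the triangles of $G$ form a connected substructure, in which case one could mimic the SRG-style spectral reasoning of Section~3 on a near-regular induced subgraph and exploit eigenvalue integrality, or whether they decompose into well-separated clusters, in which case each cluster should locally resemble $C_{3}^{*}$ and force $k = 1$, contradicting $k \geq 5$.
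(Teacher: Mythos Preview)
The statement you are attempting to prove is listed in the paper as a \emph{conjecture}, not a theorem: the authors explicitly write that they ``believe that there are no uniquely $C_{4}^{+}$-saturated graphs with more than four triangles'' and leave every $k\ge 5$ open. So there is no proof in the paper to compare against; your proposal is an attack on an open problem, and it has several genuine gaps.

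First, some of the structural hypotheses you plan to case-split on cannot occur. A vertex $v\notin T$ with $|N(v)\cap T|\ge 2$ would complete a $C_4^{+}$ on $T\cup\{v\}$, so your partition of $V(G)\setminus T$ by $|N(v)\cap T|$ collapses to the two classes $0$ and $1$; this is exactly Lemma~3.1(ii). Likewise, two triangles can never share a vertex or an edge (that would give a bowknot or a $C_4^{+}$, both excluded by Lemma~2.3), so the ``triangles sharing a vertex or edge'' branch of your obstacle analysis is vacuous---all triangles are pairwise vertex-disjoint, which the paper uses repeatedly. Second, the key local claim you make is not valid: for $w\in N(S_1)$ and a non-neighbour $w'$, the unique $C_4^{+}$ in $G+ww'$ need not involve $T=S_1$ at all; it may be a Type-\Rmnum{1} edge (two common neighbours of $w,w'$ forming a $C_4$) or a Type-\Rmnum{2} edge routed through a \emph{different} triangle $S_i$. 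The arguments for $k\in\{2,3,4\}$ in the paper work precisely because with so few triangles one can track these alternatives by hand; nothing in your outline explains how to control them for general $k$. Third, even granting a linear bound $n\le ck$, this does not contradict the quadratic bound $n\le 18k^{2}-24k+10$; you would need a \emph{lower} bound on $n$ growing faster than linearly, and none is supplied. Finally, the ``SRG-style spectral reasoning of Section~3'' you allude to does not exist in the paper---Theorem~3.1 is proved by a direct common-neighbour count, with no eigenvalue argument---so that branch of your plan has no template to follow.
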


Finally, in Section 4 we make some observations on uniquely $B_p$-saturated graphs, where the \textit{book graph} $B_p$ is  obtained by joining the two vertices in the part of size 2 of $K_{2,p}$,  as a generalization of $C_{4}^{+}$.

\section{Structural Properties  of  uniquely $C_{4}^{+}$-saturated graphs}

For a $C_{4}^{+}$-saturated graph  $G$,  we begin with a classification of $C_{4}^{+}$-saturating edges  in it.  Let  $uv$  be a non-edge of $G$, then $uv$ is an \Rmnum{1}-type  $C_{4}^{+}$-saturating edge if $d(u)=d(v)=3$  in $C_{4}^{+}\subseteq G+uv$ (see Fig. 1 left),  $uv$ is a  \Rmnum{2}-type  $C_{4}^{+}$-saturating edge  if  $d(u)=3$  and  $d(v)=2$ in $C_{4}^{+}\subseteq G+uv$ (see Fig. 1 right).
Moreover,  the induced graph of $\{u,v,a,b\}$ is $C_{4}$  if $uv$ is  an  \Rmnum{1}-type  $C_{4}^{+}$-saturating edge   and  is  $C_{3}^{*}$ if $uv$ is a  \Rmnum{2}-type  $C_{4}^{+}$-saturating edge.

\begin{center}
\begin{tikzpicture} [inner sep=0.8mm]
\tikzstyle{place}=[circle,draw=black!,fill=black!]
\tikzstyle{r}=[circle,draw=red!,fill=red!]

\node at (1.2,0)    [place] {};
\node at (0,1.5)    [place] {};
\node at (2.4,1.5)    [place] {};
\node at (1.2,3)    [place] {};

\coordinate [label=above left:$u$] () at (0,1.5);
\coordinate [label=above right:$v$] () at (2.4,1.5);
\coordinate [label=below right:$b$] () at (1.2,0);
\coordinate [label=above right:$a$] () at (1.2,3);

\draw[black,line width=1] (1.2,3) -- (0,1.5);
\draw[black,line width=1] (1.2,3) -- (2.4,1.5);
\draw[black,line width=1] (1.2,0) -- (0,1.5);
\draw[black,line width=1] (1.2,0) -- (2.4,1.5);
\draw[dashed, line width=1] (0,1.5) -- (2.4,1.5);

\node at (6.2,0)    [place] {};
\node at (5,1.5)    [place] {};
\node at (7.4,1.5)    [place] {};
\node at (6.2,3)    [place] {};

\coordinate [label=above left:$u$] () at (5,1.5);
\coordinate [label=above right:$b$] () at (7.4,1.5);
\coordinate [label=below right:$v$] () at (6.2,0);
\coordinate [label=above right:$a$] () at (6.2,3);

\draw[black,line width=1] (6.2,3) -- (5,1.5);
\draw[black,line width=1] (6.2,3) -- (7.4,1.5);
\draw[black,line width=1] (6.2,0) -- (7.4,1.5);
\draw[black,line width=1] (5,1.5) -- (7.4,1.5);
\draw[dashed, line width=1] (6.2,0) -- (5,1.5);

\end{tikzpicture}

\end{center}
\begin{center}
Figure 1 : \Rmnum{1}-type  $C_{4}^{+}$-saturating edge  $uv$  (left)  and  \Rmnum{2}-type  $C_{4}^{+}$-saturating edge  $uv$ (right).
\end{center}

\begin{lemma}\label{lemma:2.1}
Let $G$  be a nontrivial uniquely $C_{4}^{+}$-saturated graph, then $G$  is connected with $diam(G)=2$.
\end{lemma}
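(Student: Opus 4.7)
The plan is to reduce both the connectedness and the diameter bound to a single observation extracted from Figure~1: the endpoints of any $C_{4}^{+}$-saturating edge must share a common neighbor in $G$. Indeed, by the classification already given in the paper, every non-edge $uv \in E(\overline{G})$ is either of Type I, in which case $\{u,v,a,b\}$ induces a $C_{4}$ in $G$ and both $a$ and $b$ are common neighbors of $u$ and $v$; or of Type II, in which case $\{u,v,a,b\}$ induces a $C_{3}^{*}$ in $G$ and the apex $b$ is a common neighbor of $u$ and $v$. Either way, every non-adjacent pair in $G$ has at least one common neighbor.

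For connectedness, I would argue by contradiction. Suppose $G$ has components $G_{1}, G_{2}, \ldots$ with $|V(G)|\geq 4$, and pick $u \in V(G_{1})$, $v \in V(G_{2})$ with $u \neq v$. Then $uv$ is a non-edge, so by the observation $u$ and $v$ share a neighbor in $G$, contradicting that they lie in distinct components. Hence $G$ is connected.

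For the diameter, I would first note that $G$ is not complete: since $G$ is nontrivially uniquely $C_{4}^{+}$-saturated it has at least $4$ vertices, and $K_{n}$ with $n \geq 4$ already contains $C_{4}^{+}$, so $G \neq K_{n}$ and $\mathrm{diam}(G) \geq 2$. Conversely, for an arbitrary pair $u, v$ of distinct vertices, either $uv \in E(G)$ (so $d_G(u,v)=1$) or $uv$ is a non-edge, in which case the observation above supplies a common neighbor and gives $d_{G}(u,v)=2$. Combining, $\mathrm{diam}(G)=2$.

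The main obstacle is essentially nonexistent here: once the Type I/Type II inspection is packaged as the common-neighbor observation, both conclusions are two-line contradictions. The only point that needs a little care is justifying that $G$ is not $K_{n}$, which uses the nontriviality assumption $|V(G)| \geq 4$ together with the fact that $C_{4}^{+} \subseteq K_{4}$.
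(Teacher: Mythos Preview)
Your proof is correct and rests on the same underlying fact as the paper's: since every edge of $C_{4}^{+}$ lies in a triangle, the endpoints of any saturating non-edge must share a neighbor in $G$. The paper phrases the connectedness step via a cut-edge argument and leaves the diameter step terse, whereas you package the common-neighbor observation once and apply it uniformly; you also supply the lower bound $\mathrm{diam}(G)\ge 2$ by explicitly ruling out $K_n$, which the paper leaves implicit.
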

 \begin{proof}
If  $G$  is  disconnected,  let  $u$ and $v$  be  vertices in two  components of $G$, respectively.  Since  $uv$  is a cut-edge  in  $G+uv$  but  $C_{4}^{+}$  has  no  cut-edge,    there is no $C_{4}^{+}$  in $G+uv$,  a  contradiction.  If  $G$  has diameter at least 3,  let $P$  be  a diametical  path   of  $G$  with endpoints $x$  and $y$,    we  have  $|P|\geq4$.  Thus $G+xy$  has  no $C_{4}^{+}$,  a  contradiction  again.   Therefore, $G$  has  diameter  $2$.
\end{proof}

\begin{lemma}\label{lemma:2.2}
Let $G$  be a nontrivial  uniquely $C_{4}^{+}$-saturated graph  and  $u$,  $v$  be any two non-adjacent vertices  of $G$.  Then the distance of $u$ and $v$ is $2$ and  the number of  paths of length $2$ between $u$ and $v$  is $1$  or  $2$.  Moreover,    the number of  such paths is  $1$  if  and  only  if   there  exists  exactly one  $K_{3}$ sharing  a common edge with such a path.
\end{lemma}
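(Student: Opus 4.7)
The plan is to reduce this to a counting problem: since $G+uv$ contains exactly one copy of $C_{4}^{+}$, I would enumerate all copies of $C_{4}^{+}$ in $G+uv$ that use the new edge $uv$ and impose that the count equals $1$. The enumeration splits naturally according to whether $uv$ plays the role of a Type~\Rmnum{1} or a Type~\Rmnum{2} saturating edge in each copy, and the two halves of the lemma's statement will emerge from this split.

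First I would dispose of the distance claim: since $uv\notin E(G)$ we have $d(u,v)\geq 2$, while \autoref{lemma:2.1} gives $d(u,v)\leq diam(G)=2$, so $d(u,v)=2$. Writing $p=|N(u)\cap N(v)|$, the length-$2$ paths between $u$ and $v$ are in bijection with the common neighbors of $u$ and $v$, so $p\geq 1$ and it remains to bound $p$ and to characterise when $p=1$.

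Next I would classify the $C_{4}^{+}$ subgraphs of $G+uv$. Each such copy must contain $uv$, otherwise it would already lie in the $C_{4}^{+}$-free graph $G$. A Type~\Rmnum{1} copy (where $uv$ is the chord) is determined by an unordered pair in $N(u)\cap N(v)$, giving $\binom{p}{2}$ copies. A Type~\Rmnum{2} copy in which $u$ is the degree-$3$ endpoint of $uv$ is determined by a common neighbor $a\in N(u)\cap N(v)$ together with a third vertex $b$ forming a triangle $\{u,a,b\}$ in $G$; the side condition $b\neq v$ is automatic because $v\notin N(u)$. Summing over the two orientations, the total number of $C_{4}^{+}$'s in $G+uv$ is
\[
\binom{p}{2}+\sum_{a\in N(u)\cap N(v)}\bigl(|N(u)\cap N(a)|+|N(v)\cap N(a)|\bigr),
\]
and the second summand is precisely the number of triangles of $G$ sharing a common edge with some length-$2$ $u$--$v$ path.

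Finally I would trichotomise on $p$ and set the total equal to $1$. If $p\geq 3$, then $\binom{p}{2}\geq 3$, impossible. If $p=2$, with common neighbors $a_{1},a_{2}$, then $a_{1}a_{2}\notin E(G)$ (otherwise $\{u,v,a_{1},a_{2}\}$ already spans a $C_{4}^{+}$ in $G$), so $\binom{p}{2}=1$ exhausts the quota and the triangle sum must vanish, i.e.\ no triangle shares an edge with either path. If $p=1$, then $\binom{p}{2}=0$ and the triangle sum must equal exactly $1$, giving precisely one triangle sharing an edge with the unique length-$2$ path; this is the claimed equivalence. The main obstacle I anticipate is the bookkeeping in the Type~\Rmnum{2} count: one must verify that distinct data $(a,b)$ produce distinct $C_{4}^{+}$'s, that no copy of $C_{4}^{+}$ through $uv$ is missed, and that the $C_{4}^{+}$-freeness of $G$ is invoked at the correct point (namely, to forbid the edge between two common neighbors in the $p=2$ case, which is what prevents an otherwise-problematic overcounting).
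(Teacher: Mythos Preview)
Your proposal is correct and follows essentially the same approach as the paper: both arguments pivot on the Type~\Rmnum{1}/Type~\Rmnum{2} dichotomy for the role of $uv$ in the unique copy of $C_4^+$, with your version making explicit the enumeration that the paper's two-line proof leaves implicit. Your anticipated bookkeeping obstacles all resolve cleanly (distinct $(a,b)$ give distinct $5$-edge subgraphs since the copy determines $a$ as the degree-$3$ vertex other than the degree-$3$ endpoint of $uv$ and $b$ as the remaining degree-$2$ vertex); the observation $a_1a_2\notin E(G)$ in the $p=2$ case is true but is not actually needed to validate the count---your displayed formula is already injective on the data without it.
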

\begin{proof}
For  any two non-adjacent vertices $u$  and  $v $ of $G$,  we know their distance is 2  by   \autoref{lemma:2.1}.  By the classification of the $C_{4}^{+}$-saturating  edges,  if  $uv$  is a \Rmnum{2}-type $C_{4}^{+}$-saturating edge,   then  there is one path of length $2$ between $u$ and $v$ and  there exists exactly one  $K_{3}$ sharing  a common edge with that path.  If  $uv$  is an \Rmnum{1}-type $C_{4}^{+}$-saturating edge,  then  the number of  paths of length $2$ between $u$ and $v$  is $2$  and  there  is no  such  $K_{3}$.
\end{proof}

Let the \textit{house graph} denote the graph  obtained by  adding a chord to $C_{5}$  and let the \textit{bowknot  graph}  denote   two triangles sharing one vertex.  Let us show that the uniquely $C_{4}^{+}$-saturated graph  has some  forbidden  subgraphs  in  the  following result.

\begin{lemma}\label{lemma:2.3}
Let $G$  be a  uniquely $C_{4}^{+}$-saturated graph, then $G$  contains no   bowknot  graph,   house graph  and  $K_{2,3}$  as  subgraphs.
\end{lemma}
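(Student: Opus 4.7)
The plan is to handle each of the three forbidden configurations by identifying a carefully chosen non-edge whose addition conflicts either with \autoref{lemma:2.2} (which bounds the number of length-$2$ paths between non-adjacent vertices by $2$) or with the unique saturation hypothesis (by producing two distinct copies of $C_{4}^{+}$). The basic fact I keep reusing is that any $4$-vertex subset of $G$ carrying at least $5$ edges contains a copy of $C_{4}^{+}$, which is forbidden.

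For $K_{2,3}$, let the parts be $\{u_{1},u_{2}\}$ and $\{w_{1},w_{2},w_{3}\}$. First I would observe that $u_{1}u_{2}\notin E(G)$, since otherwise $\{u_{1},u_{2},w_{1},w_{2}\}$ carries at least $5$ edges and hence a $C_{4}^{+}$. But then the non-edge $u_{1}u_{2}$ has the three distinct common neighbors $w_{1},w_{2},w_{3}$, yielding three length-$2$ paths between $u_{1}$ and $u_{2}$, which is explicitly ruled out by \autoref{lemma:2.2}.

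For the bowknot graph, write the two triangles as $uab$ and $ucd$, sharing the vertex $u$. A $4$-vertex edge count forces $ac,ad,bc,bd\notin E(G)$; for instance if $ac\in E(G)$ then $\{u,a,b,c\}$ already contains a $C_{4}^{+}$. I then focus on the non-edge $ac$ and split on the number of length-$2$ paths between $a$ and $c$. If $u$ is the unique common neighbor, then \autoref{lemma:2.2} requires exactly one triangle sharing an edge with the path $a$--$u$--$c$; but the triangle $uab$ sits on edge $ua$ and $ucd$ sits on edge $uc$, a contradiction. If there is a further common neighbor $z$ of $a$ and $c$, then $uz\notin E(G)$ (else $\{a,u,c,z\}$ would span a $C_{4}^{+}$ already in $G$), so adding $ac$ creates a $C_{4}^{+}$ on $\{a,u,c,z\}$; independently, adding $ac$ also creates a $C_{4}^{+}$ on $\{u,a,b,c\}$ via the triangle $uab$. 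Since $b\neq z$ (otherwise $bc\in E(G)$, contradicting the previous paragraph), these are two distinct copies, contradicting unique saturation.

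For the house graph, label it as the triangle $v_{1}v_{2}v_{3}$ glued to the $4$-cycle $v_{1}v_{3}v_{4}v_{5}$ along the edge $v_{1}v_{3}$. The same $4$-vertex edge counts rule out $v_{1}v_{4},v_{3}v_{5},v_{2}v_{4},v_{2}v_{5}$ as edges of $G$. I then consider the non-edge $v_{1}v_{4}$, whose common neighbors include $v_{3}$ and $v_{5}$. On the one hand $\{v_{1},v_{3},v_{4},v_{5}\}$ gives a $C_{4}^{+}$ in $G+v_{1}v_{4}$ (using $v_{3}v_{5}\notin E(G)$); on the other hand, the triangle $v_{1}v_{2}v_{3}$ sitting on edge $v_{1}v_{3}$ produces a second $C_{4}^{+}$ on $\{v_{1},v_{2},v_{3},v_{4}\}$ (using $v_{2}v_{4}\notin E(G)$). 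Two distinct copies appear, again contradicting unique saturation. The main (and fairly modest) obstacle is the bookkeeping: for each forbidden subgraph one must verify that the two claimed copies of $C_{4}^{+}$ are genuinely distinct and that every edge invoked is indeed present (and every putative chord genuinely absent), which in each case reduces to a handful of applications of the ``$5$ edges on $4$ vertices'' principle.
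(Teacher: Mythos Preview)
Your proof is correct and follows essentially the same idea as the paper: for each configuration, single out a non-edge whose addition forces at least two copies of $C_{4}^{+}$ (equivalently, violates \autoref{lemma:2.2}). The only notable difference is in the bowknot case, where your case split on whether $a$ and $c$ have a second common neighbor $z$ is unnecessary: already the two triangles $uab$ and $ucd$ of the bowknot give two distinct copies of $C_{4}^{+}$ on $\{u,a,b,c\}$ and $\{u,a,c,d\}$ in $G+ac$, with no need to invoke $z$ or the ``exactly one triangle'' clause of \autoref{lemma:2.2}. This is the paper's (terser) argument, and it subsumes both of your cases at once.
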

 \begin{proof}
Let  $u$ and  $v$ be vertices in a copy of the bowknot graph in $G$ that are not the common vertex of the two triangles.  $G$  contains $C_{4}^{+}$  if  $u$,  $v$  are  adjacent in $G$ and $G+uv$  contains  two $C_{4}^{+}$  if  $u$,  $v$  are  non-adjacent  in $G$,  which  contradicts  the  fact that $G$  is  uniquely $C_{4}^{+}$-saturated.  Similarly,  we  consider  the  two vertices  $u$,  $v$  with  $d(u,v)=2$ in the house graph  and  consider the two vertices $u$, $v$  in the smaller part of  $K_{2,3}$,  and get a contradiction again.
\end{proof}

Let $G$  be a nontrivial uniquely $C_{4}^{+}$-saturated graph,  we next  examine  the girth of $G$.

\begin{lemma}\label{lemma:2.4}
Let $G$  be a nontrivial uniquely $C_{4}^{+}$-saturated graph, then $G$   has girth $3$ or $4$.
\end{lemma}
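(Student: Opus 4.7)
The plan is to rule out the only two possibilities besides $g(G)\in\{3,4\}$, namely that $G$ is acyclic (girth $\infty$) and that $g(G)\ge 5$. Both cases should collapse quickly using \autoref{lemma:2.1} and \autoref{lemma:2.2}, together with the observation that $G\ne K_n$ for any $n\ge 4$ (otherwise $G$ would contain $C_4^+$).

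First I would dispose of the acyclic case. If $G$ is a forest, then \autoref{lemma:2.1} forces $G$ to be a tree of diameter $2$, hence a star $K_{1,n-1}$ with $n\ge 4$ (using nontriviality). But inspecting $K_{1,n-1}+e$ for any non-edge $e$ between two leaves shows that the augmented graph has only four edges on any four vertices containing $e$ (the center, the two endpoints of $e$, and any other leaf), whereas $C_4^+$ has five edges on four vertices. Thus no copy of $C_4^+$ can appear, contradicting $C_4^+$-saturation. This forces $G$ to contain at least one cycle, so $g(G)$ is finite.

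Next I would rule out $g(G)\ge 5$. Since $G$ is nontrivial and $C_4^+$-free, $G\ne K_n$, so there exist non-adjacent vertices $u,v\in V(G)$. By \autoref{lemma:2.2}, either (a) $u$ and $v$ have exactly two common neighbors in $G$, or (b) they have exactly one common neighbor $w$ such that some triangle shares the edge $uw$ or $vw$. Possibility (b) immediately produces a $K_3$, contradicting $g(G)\ge 5$. In possibility (a), the two common neighbors together with $u,v$ yield a $4$-cycle in $G$, again contradicting $g(G)\ge 5$. Either branch is impossible, so $g(G)\le 4$, and combined with the first step we obtain $g(G)\in\{3,4\}$.

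Neither step looks like a real obstacle: the argument is essentially an application of the structural dichotomy furnished by \autoref{lemma:2.2}. The only mild care needed is verifying that $G$ does have a non-adjacent pair (which uses $|G|\ge 4$ plus $C_4^+$-freeness) and that the star case genuinely fails to be saturated (a direct edge-count). After these small checks, the conclusion is immediate.
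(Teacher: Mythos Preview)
Your argument is correct and hinges on the same key observation as the paper: the dichotomy of \autoref{lemma:2.2} forces any non-adjacent pair either to lie in a $C_4$ or to be adjacent to a triangle. The paper organises the case split differently, first dispatching $g(G)\ge 6$ via the standard inequality $g\le 2\,diam(G)+1$ (which with \autoref{lemma:2.1} gives $g\le 5$), and then treating $g(G)=5$ by applying the Type~I/Type~II classification to the specific chord $v_1v_3$ of a shortest $5$-cycle. Your version instead applies \autoref{lemma:2.2} to an \emph{arbitrary} non-edge, which kills all of $g(G)\ge 5$ in one stroke---and in fact also covers the acyclic case, since a non-edge exists and \autoref{lemma:2.2} then produces a $C_4$ or a $K_3$ inside the supposed forest. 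So your separate treatment of stars, while correct, is redundant. One small point in your favour: the paper's implication ``$g(G)\ge 6\Rightarrow diam(G)\ge 3$'' tacitly assumes $G$ contains a cycle (a star has diameter $2$ and infinite girth), so your explicit attention to the acyclic case is a minor gain in rigour, even if ultimately unnecessary.
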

 \begin{proof}
If  $G$  has girth at least $6$,  then $diam(G)\geq3$  as a  contradiction  to  \autoref{lemma:2.1}.   Now we assume that $g(G)=5$  and  let $C$ be a shortest  cycle in $G$  with  $V(C)=\{v_{1},v_{2},v_{3},v_{4},v_{5}\}$.   But  there exists a $C_{4}$  in $G$   if  $v_{1}v_{3}$  is an  \Rmnum{1}-type $C_{4}^{+}$-saturating edge   and   there exists a $C_{3}^{*}$  in $G$  if  $v_{1}v_{3}$  is a  \Rmnum{2}-type $C_{4}^{+}$-saturating edge,  which contradicts   $g(G)=5$.   Thus  $G$   has girth $3$ or $4$.
\end{proof}

\section{Uniquely $C_{4}^{+}$-saturated graphs}

Firstly  we  introduce the definition of strongly regular graphs.

\begin{definition}
A  graph $G$ on $n$  vertices is a strongly regular  graph with  parameters $(n, k, \lambda, \mu)$  whenever it is not complete or edgeless and

(\rmnum{1}). each vertex has  degree  $k$;

(\rmnum{2}). each pair of adjacent vertices  of  $G$  has  $\lambda$  common  neighbors;

(\rmnum{3}). each pair of non-adjacent vertices of  $G$   has  $\mu$  common  neighbors.
\end{definition}

In particular,   except for balanced complete bipartite graph  $K_{k,k}$  with  parameters $(2k, k, 0, k)$,  there are  known only seven triangle-free strongly regular graphs   \cite{N.L.Biggs2009,A.Razborov2022}:

(\rmnum{1}). The \textit{pentagon} with parameters $(5,2,0,1)$;

(\rmnum{2}). The \textit{Petersen graph} with parameters $(10,3,0,1)$;

(\rmnum{3}). The \textit{Hoffman-Singleton graph} with parameters $(50,7,0,1)$;

(\rmnum{4}). The \textit{folded $5$-cube} with parameters $(16,5,0,2)$;

(\rmnum{5}). The \textit{Gewirtz graph}  with parameters $(56,10,0,2)$;

(\rmnum{6}). The \textit{$M_{22}$ graph}  with parameters $(77,16,0,4)$;

(\rmnum{7}). The \textit{Higman-Sims graph}, with parameters $(100,22,0,6)$.

Each of these seven graphs is uniquely determined by its parameters.  We  can  see that  the first three  graphs  are  Moore graphs with diameter two, a  more detailed description of these graphs can be found in \cite{A.E.Brouwer2012}. It is unknown whether there are any other examples  of  triangle-free strongly regular graphs.

For a  graph  $G$  and  $U\subseteq V(G)$, let $d(v,U)=min\{d(v,u): u\in U\}$,  $N(U)=\{v\in V(G): d(v,U)=1\}$ and $N^{k}(U)=\{v\in V(G): d(v,U)=k\}$.

Let $G$  be a  nontrivial  uniquely $C_{4}^{+}$-saturated graph,   by  \autoref{lemma:2.4},  we have  $g(G)=3$  or  $4$.   In this section,  we  show  that  $G$  has girth $4$  if  and  only if  it is a strongly regular graph  with  special parameters.
For $n>18k^{2}-24k+10$  with  $k\geq2$,  we  prove  that   there are no uniquely $C_{4}^{+}$-saturated graphs  on  $n$  vertices  with  $k$ triangles.  In particular,  $C_{3}^{*}$  is the only nontrivial uniquely $C_{4}^{+}$-saturated graph with  one triangle,  and there are no $C_{4}^{+}$-saturated graphs  with  two, three  or  four  triangles.

\begin{theorem}\label{theorem:3.1}
 $G$  is a   uniquely $C_{4}^{+}$-saturated graph on $n$ vertices with  $g(G)=4$  if  and  only if   $G$  is a strongly regular graph  with   parameters  $(n,k,0,2)$ for some $k$.
\end{theorem}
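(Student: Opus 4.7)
The plan is to prove the two directions separately, using \autoref{lemma:2.1} and \autoref{lemma:2.2} from Section 2.

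For the sufficiency direction, suppose $G$ is strongly regular with parameters $(n,k,0,2)$. First observe that $\lambda=0$ forces $G$ to be triangle-free, hence $C_4^+$-free, while $\mu=2$ guarantees a $4$-cycle through any non-adjacent pair, so $g(G)=4$. For any non-edge $uv$ of $G$, the two common neighbors $w_1,w_2$ of $u,v$ are themselves non-adjacent (otherwise $u$ would be a common neighbor of the adjacent pair $w_1,w_2$, violating $\lambda=0$), so $\{u,v,w_1,w_2\}$ induces a $C_4^+$ in $G+uv$ with chord $uv$. For uniqueness, every $C_4^+\subseteq G+uv$ must use the new edge, and I would split on the role of $uv$ in the copy: if $uv$ is the chord, the copy is determined by the unordered pair $\{w_1,w_2\}$; if $uv$ is a non-chord edge of $C_4^+$, then the fourth vertex (lying outside the triangle through $uv$) would be a common neighbor of two vertices already adjacent in $G$, contradicting $\lambda=0$.

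For the necessity direction, assume $G$ is uniquely $C_4^+$-saturated with $g(G)=4$. Triangle-freeness gives $\lambda=0$ automatically, and \autoref{lemma:2.2} combined with the absence of triangles forces every non-adjacent pair to have exactly $\mu=2$ common neighbors, since the one-path case of that lemma requires an accompanying triangle. The remaining task, and the main obstacle, is showing that $G$ is regular. For this I would fix an edge $uv$ and set $A=N(u)\setminus\{v\}$, $B=N(v)\setminus\{u\}$, which are disjoint by triangle-freeness. For each $a\in A$ the pair $a,v$ is non-adjacent (triangle-freeness again), so $\mu=2$ yields exactly two common neighbors of $a$ and $v$, namely $u$ together with a unique vertex of $B$ adjacent to $a$. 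Symmetrically, each $b\in B$ has a unique neighbor in $A$, so counting $E[A,B]$ from both sides gives $|A|=|B|$ and hence $d(u)=d(v)$. Combined with connectedness from \autoref{lemma:2.1}, this yields $k$-regularity; since $g(G)=4$ means $G$ is neither complete nor edgeless, we conclude that $G$ is strongly regular with parameters $(n,k,0,2)$, as desired.
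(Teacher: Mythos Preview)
Your proof is correct and follows essentially the same approach as the paper: both directions hinge on triangle-freeness forcing $\lambda=0$, \autoref{lemma:2.2} forcing $\mu=2$, and a neighbor-matching argument across an edge to obtain regularity. The only organizational difference is that you establish $\mu=2$ first and then invoke it to double-count $E[A,B]$, whereas the paper proves regularity directly from the type-\Rmnum{1} classification and $K_{2,3}$-freeness (\autoref{lemma:2.3}) before turning to $\mu$; the underlying mechanism is the same.
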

\begin{proof}
Let  $G$  be a   uniquely $C_{4}^{+}$-saturated graph  with  $g(G)=4$.  Firstly, we show that  $G$  is  regular.  Let  $u$,  $v$  be adjacent vertices of  $G$ with  $d(u)\leq d(v)$  and  $N(v)=\{u, v_{1},v_{2},\ldots,v_{t}\}$.  We  can  see that  $N(v)$  is an independent set  since  $G$  is  triangle-free.  For  any  $i\in[t]$, $G+uv_{i}$  has exactly  one  $C_{4}^{+}$  and  $uv_{i}$  is an  \Rmnum{1}-type $C_{4}^{+}$-saturating edge  since  $g(G)=4$.  Moreover,  $v\in C_{4}^{+}$  by  \autoref{lemma:2.2}.  For convenience,  we  say  that  the  vertex set of $C_{4}^{+}$  in  $G+uv_{i}$  is $\{u,v,v_{i},w_{i}\}$  for  $i\in[t]$  and  we have $G[\{u,v,v_{i},w_{i}\}]=C_{4}$,   $w_{i}$  is  adjacent to $u$, $v_{i}$.
We can claim  $w_{i}\neq w_{j}$  for  any  $i\neq j$.  If  not,   without loss of generality,  we  say  $w_{1}=w_{2}$,  but  then $w_1$ and $v$ have three common neighbors $u,v_1,v_2$, forming a $K_{2,3}$,  which  contradicts to \autoref{lemma:2.3}.   Thus  we  can conclude that adjacent vertices in $G$ have the same degree.  Since $G$ is connected  by \autoref{lemma:2.1},  it follows that $G$ is $k$-regular for some $k$.

For  any  two  non-adjacent vertices   $u$  and  $v$  of  $G$,  then   $d_{G}(u,v)=2$  by \autoref{lemma:2.1}.  If  the  number of paths of length $2$ between $u$ and $v$  is $1$, then $G$   contains  a   triangle,   contradicting  $g(G)=4$.     Thus  the number of  such paths   is  $2$   by   \autoref{lemma:2.2},   which  means  that  each pair of non-adjacent vertices of  $G$  has exactly two  common  neighbors.  Since $G$ is  triangle-free,   each pair of adjacent vertices of  $G$  has no  common  neighbor.  And  $G$ is $k$-regular,   then  $G$  is a strongly regular graph  with   parameters  $(n,k,0,2)$  by  its  definition.

On the other hand, if  $G$  is a strongly regular graph  with   parameters  $(n,k,0,2)$,   by  its definition,    $G$  is triangle-free, thus $C_{4}^{+}$-free. And for  any non-edge  $uv$  of  $G$,   $u$  and  $v$  have exactly  two  common neighbors,  then  $G[N_{G}[u]\cap N_{G}[v]]=C_{4}$.  Thus  $G+uv$  contains exactly  one  $C_{4}^{+}$ and   $uv$  is  an  \Rmnum{1}-type  $C_{4}^{+}$-saturating  edge,  which  means that  $G$  is a nontrivial  uniquely $C_{4}^{+}$-saturated graph  with  $g(G)=4$.
\end{proof}

\begin{remark}\label{remark:3.1}
We  can see  that  only three  uniquely $C_{4}^{+}$-saturated graphs with  $g(G)=4$  are known   by  \autoref{theorem:3.1},  they  are cycle $C_{4}$  with  parameters $(4, 2, 0, 2)$,   folded $5$-cube with parameters $(16,5,0,2)$   and  Gewirtz graph with parameters $(56,10,0,2)$.
\end{remark}

We now consider  uniquely $C_{4}^{+}$-saturated graphs with $g(G)=3$. The next lemma gives  a structural decomposition.

\begin{lemma}\label{lemma:3.1}
Let $S$  be the vertex set of a triangle in a graph $G$ with $S=\{v_{1},v_{2},v_{3}\}$.  For  $i\in[3]$,  let  $V_{i}=N(v_{i})-S$.  If $G$ is nontrivial uniquely $C_{4}^{+}$-saturated, then $G$ has the following structure:

(\rmnum{1}).  $V(G)$  has a partition $V(G)=S\cup N(S)\cup N^{2}(S)$.

(\rmnum{2}).  $V_{i}\cap V_{j}=\emptyset$  for  $i\neq j$.

(\rmnum{3}).  $N(S)=V_{1}\cup V_{2}\cup V_{3}$  is an  independent set  in $G$.

(\rmnum{4}).  each vertex in $N^{2}(S)$ has  one  or two neighbors in each $V_{i}$   for  $i\in[3]$.
\end{lemma}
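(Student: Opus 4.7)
The plan is to deduce (i)--(iv) one at a time, in each case pointing to a forbidden configuration supplied by Lemma~2.3 or the diameter bound of Lemma~2.1. Part (i) is essentially free: Lemma~2.1 gives $\operatorname{diam}(G)=2$, so every vertex of $G$ lies at distance $0$, $1$, or $2$ from the triangle $S$, and the three distance classes are exactly $S$, $N(S)$, $N^{2}(S)$.

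For (ii), I would argue directly that a vertex $u\in V_i\cap V_j$ with $i\neq j$ would be adjacent to both $v_i$ and $v_j$; together with the triangle $v_1v_2v_3$, the four vertices $\{u,v_1,v_2,v_3\}$ then carry two triangles sharing the edge $v_iv_j$, i.e.\ a copy of $C_4^+$, contradicting that $G$ is $C_4^+$-free. For (iii), suppose $u\in V_i$ and $w\in V_j$ are adjacent. If $i=j$, then $\{v_i,u,w\}$ is a triangle meeting $S$ only in $v_i$, producing a bowknot graph, forbidden by Lemma~2.3. If $i\neq j$, say $i=1$ and $j=2$, then the sequence $v_3\,v_1\,u\,w\,v_2\,v_3$ is a $5$-cycle and $v_1v_2$ is a chord of it, so we obtain a house graph, again ruled out by Lemma~2.3.

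For (iv), fix $x\in N^{2}(S)$ and $i\in\{1,2,3\}$. Since $\operatorname{diam}(G)=2$ and $x$ is non-adjacent to $v_i$, the two vertices have a common neighbor; because $x$ has no neighbor in $S$, this common neighbor must lie in $V_i$, so $x$ has at least one neighbor in $V_i$. Conversely, if $x$ had three distinct neighbors $y_1,y_2,y_3\in V_i$, then the two vertices $\{x,v_i\}$ together with $\{y_1,y_2,y_3\}$ would span a $K_{2,3}$, once more contradicting Lemma~2.3; hence $x$ has at most two neighbors in $V_i$.

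I expect the only step where the right gadget is not immediately visible is (iii) with $i\neq j$: one has to notice that the $5$-cycle $v_3v_1uwv_2$ together with the chord $v_1v_2$ is precisely the house graph. All remaining parts reduce to direct applications of Lemma~2.1 or Lemma~2.3, so the proof should be short once this observation is in hand.
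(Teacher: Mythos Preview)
Your proposal is correct and follows essentially the same approach as the paper: (i) from $\operatorname{diam}(G)=2$, (ii) from $C_4^+$-freeness, (iii) from the forbidden bowknot and house configurations of Lemma~2.3, and (iv) by combining the existence of a common neighbor with the $K_{2,3}$ exclusion. Your treatment of (iii) is in fact more explicit than the paper's, which simply asserts that the absence of bowknots and house graphs forces $N(S)$ to be independent; your identification of the $5$-cycle $v_3v_1uwv_2$ with chord $v_1v_2$ is exactly the right gadget.
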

 \begin{proof}
By  \autoref{lemma:2.1},  $diam(G)=2$ implies (\rmnum{1}).  Figure 2 makes the structure of $G$ clearer.  Since  $G$  is  $C_{4}^{+}$-free,  (\rmnum{2})  holds.  By  \autoref{lemma:2.3},  $G$  contains no  bowknot  graph  and  house graph  as  subgraphs,  thus $N(S)$  is independent, implying (\rmnum{3}).
For  any  $u\in N^{2}(S)$  and  $i\in[3]$,  $G+uv_{i}$   has exactly one $C_{4}^{+}$.  By  \autoref{lemma:2.2},  $u$  must  have a neighbor in  $V_{i}$  for  $i\in[3]$.  If there  exists a  vertex $u\in N^{2}(S)$  which  has  at  least  three  neighbors in some $V_{i}$,   then  $u$ and $v_i$ have at least three common neighbors, forming a $K_{2,3}$, hence contradicting \autoref{lemma:2.3}.   Thus  each vertex in $N^{2}(S)$ has  one  or two neighbors in each $V_{i}$   for  $i\in[3]$, proving (\rmnum{4}).
\end{proof}

\begin{center}
\begin{tikzpicture} [inner sep=0.8mm]
\tikzstyle{place}=[circle,draw=black!,fill=black!]
\tikzstyle{r}=[circle,draw=red!,fill=red!]

\node at (0,0)    [place] {};
\node at (0,4)    [place] {};
\node at (1.5,2)    [place] {};

\draw[black,line width=1] (0,0) -- (0,4);
\draw[black,line width=1] (0,0) -- (1.5,2);
\draw[black,line width=1] (0,4) -- (1.5,2);

\coordinate [label=below left:$v_{3}$] () at (0,0);
\coordinate [label=above left:$v_{1}$] () at (0,4);
\coordinate [label=below right:$v_{2}$] () at (1.5,2);
\node at (0.5,-2.5) {$S$};

\draw[black,line width=1] (4.5,5) circle (0.8cm);
\draw[black,line width=1] (4.5,2) circle (0.8cm);
\draw[black,line width=1] (4.5,-1) circle (0.8cm);

\node at (4.5,5) {$V_{1}$};
\node at (4.5,2) {$V_{2}$};
\node at (4.5,-1) {$V_{3}$};
\node at (4.5,-2.5) {$N(S)$};

\draw[black,line width=1] (0,4) -- (3.7,5);
\draw[black,line width=1] (1.5,2) -- (3.7,2);
\draw[black,line width=1] (0,0) -- (3.7,-1);

\draw[black,line width=1] (8,2) ellipse (35pt and 80pt);
\node at (8,-2.5) {$N^{2}(S)$};
\node[inner sep=0.5mm] at (8,3)    [place] {};

\node[inner sep=0.5mm] at (4.5,5.5)    [place] {};
\node[inner sep=0.5mm] at (4.8,2.5)    [place] {};
\node[inner sep=0.5mm] at (4.8,1.5)    [place] {};
\node[inner sep=0.5mm] at (4.5,-0.5)    [place] {};

\draw[black,line width=1] (8,3) -- (4.5,5.5);
\draw[black,line width=1] (8,3) -- (4.8,2.5);
\draw[black,line width=1] (8,3) -- (4.8,1.5);
\draw[black,line width=1] (8,3) -- (4.5,-0.5);

\end{tikzpicture}

\end{center}
\begin{center}
 Figure 2 : Structure of uniquely $C_{4}^{+}$-saturated graphs with girth  $3$.
\end{center}

\begin{theorem}\label{theorem:3.2}
Let $G$  be a nontrivial uniquely $C_{4}^{+}$-saturated graph with  $g(G)=3$  and $S$  be the vertex set of a triangle  in $G$.    If  $N^{2}(S)=\emptyset$,  then $G$ is isomorphic to   $C_{3}^{*}$.
\end{theorem}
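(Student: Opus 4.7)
The plan is to combine the structural decomposition from \autoref{lemma:3.1} with the assumption $N^{2}(S)=\emptyset$ to pin down $G$ completely. By \autoref{lemma:3.1}, one has $V(G)=S\cup V_{1}\cup V_{2}\cup V_{3}$ with the $V_{i}$ pairwise disjoint and $V_{1}\cup V_{2}\cup V_{3}$ an independent set. The key first observation is that every vertex $u\in V_{i}$ has $v_{i}$ as its unique neighbor in $G$: since $N^{2}(S)=\emptyset$, we have $N(u)\subseteq S\cup N(S)$, but $N(S)$ is independent so $u$ has no neighbor there, and the vertices $v_{j}$ with $j\neq i$ cannot be neighbors of $u$ either (otherwise $u$ would belong to $V_{i}\cap V_{j}=\emptyset$). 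Hence $N(u)=\{v_{i}\}$.

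I would next show that $|V_{i}|\leq 1$ for each $i$. If $u,u'\in V_{i}$ are distinct, then they are non-adjacent and the only path of length $2$ between them is $u\,v_{i}\,u'$, so by \autoref{lemma:2.2} the non-edge $uu'$ must be a \Rmnum{2}-type $C_{4}^{+}$-saturating edge, which requires some triangle to share an edge with this path. However, neither $uv_{i}$ nor $v_{i}u'$ lies in a triangle because the sole neighbor of $u$ (resp.\ of $u'$) is $v_{i}$, a contradiction. Analogously, at most one $V_{i}$ can be nonempty: given $u\in V_{i}$ and $u'\in V_{j}$ with $i\neq j$, the unique neighbor of $u$ is $v_{i}\notin N(u')$, so $u$ and $u'$ share no common neighbor, contradicting $diam(G)=2$ from \autoref{lemma:2.1}.

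Combining these two bounds with the nontriviality of $G$ (so $|V(G)|\geq 4$ and therefore some $V_{i}$ is nonempty) forces exactly one $V_{i}$ to be a singleton $\{u\}$, whence $V(G)=\{v_{1},v_{2},v_{3},u\}$ consists of the triangle $S$ together with a single pendant edge $uv_{i}$; that is, $G\cong C_{3}^{*}$. The argument is essentially immediate once the neighborhood of every $V_{i}$-vertex is pinned down; the only mild subtlety is correctly invoking the \Rmnum{2}-type classification of \autoref{lemma:2.2}, which in the present setting turns out to be incompatible with any $V_{i}$ containing more than one vertex, and with two distinct $V_{i}$'s being simultaneously nonempty.
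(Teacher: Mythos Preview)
Your proof is correct and follows essentially the same approach as the paper's. Both arguments rest on \autoref{lemma:3.1} together with the hypothesis $N^{2}(S)=\emptyset$; the only difference is organizational: you first isolate the observation $N(u)=\{v_{i}\}$ for every $u\in V_{i}$ and then deduce both $|V_{i}|\le 1$ (via \autoref{lemma:2.2}) and that at most one $V_{i}$ is nonempty (via $diam(G)=2$), whereas the paper handles these two conclusions directly by analyzing the unique $C_{4}^{+}$ created by the relevant non-edges, without explicitly recording the degree-one fact.
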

 \begin{proof}
Let  $S=\{v_{1}, v_{2}, v_{3}\}$  and  $V_{i}=N(v_{i})-S$.  We have $V(G)=S\cup N(S)=S\cup V_{1}\cup V_{2}\cup V_{3}$  since  $N^{2}(S)=\emptyset$.   For  $x\in V_{i}$  and  $y\in V_{j}$  with  $i\neq j$,   $G+xy$  has exactly  one  $C_{4}^{+}$  as a subgraph. This implies that $x$ and $y$ have a common neighbor, but that vertex cannot be in $S$, nor in $N(S)$ by \autoref{lemma:3.1}, a contradiction. Therefore, only one set in  $\{V_{1}, V_{2}, V_{3}\}$ is not  empty.   Without loss of generality,  we  say  $V_{1}\neq\emptyset$  and  $V_{2}=V_{3}=\emptyset$.    If  $|V_{1}|\geq2$,  let  $\{x,y\}\subset V_{1}$,  $G+xy$  has exactly  one  $C_{4}^{+}$.  Then  $v_{1}\in C_{4}^{+}$   by  \autoref{lemma:2.2}.  We  say  $V(C_{4}^{+})=\{x,y,z,v_{1}\}$, then  $x$  or  $y$  is adjacent to $z$.  If  $z=v_{2}$  or  $v_{3}$,  then  $G$  has a  $C_{4}^{+}$  as a clear  contradiction.   If  $z\in V_{1}$,  then  $N(S)=V_{1}$  is not  independent  as a contradiction  to  \autoref{lemma:3.1}.
Thus  $|V_{1}|=1$  and   our result follows immediately.
\end{proof}

Now  we  assume   $N^{2}(S)\neq\emptyset$  for  any triangle $S$  in  $G$.

\begin{lemma}\label{lemma:3.2}
Let  $G$  be a uniquely $C_{4}^{+}$-saturated graph with  $k\geq1$ triangle and  $A$  be the vertex set  of   all triangle(s)  in  $G$.   If  $N^{2}(A)\neq\emptyset$,   then  each vertex in $G-A$  has the same degree  in $G$.
\end{lemma}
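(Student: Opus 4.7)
The plan is to first establish a one-sided degree comparison: for every $v\in G-A$ and every $u\in N(v)$, $d(u)\ge d(v)$. I would write $N(v)=\{u,v_{1},\ldots,v_{t}\}$, where $t=d(v)-1$. Since $v$ lies in no triangle, $N(v)$ is independent; in particular each $uv_{i}$ is a non-edge of $G$ and $v$ is a common neighbor of $u$ and $v_{i}$. Using \autoref{lemma:2.2}, I would argue that $uv_{i}$ must be an \Rmnum{1}-type saturating edge: if it were \Rmnum{2}-type with $v$ as the unique common neighbor, then $v$ would be forced into a triangle with $u$ or $v_{i}$, contradicting $v\in G-A$. Consequently $u$ and $v_{i}$ share a second common neighbor $w_{i}\in N(u)\setminus\{v\}$.

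Next I would verify that $w_{1},\ldots,w_{t}$ are pairwise distinct. If $w_{i}=w_{j}$ for some $i\ne j$, then $\{v,w_{i}\}$ and $\{u,v_{i},v_{j}\}$ span a $K_{2,3}$ in $G$ (note that $\{u,v_i,v_j\}$ is independent because $N(v)$ is), contradicting \autoref{lemma:2.3}. Combined with $w_{i}\ne v$, this shows that $\{v,w_{1},\ldots,w_{t}\}\subseteq N(u)$ consists of $t+1$ distinct vertices, so $d(u)\ge t+1=d(v)$. Applying this one-sided comparison in both directions to any edge $uv$ with $u,v\in G-A$ gives $d(u)=d(v)$.

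To promote this edge-wise equality to a global statement I would invoke the hypothesis $N^{2}(A)\ne\emptyset$ together with the diameter bound from \autoref{lemma:2.1}. Fix $x\in N^{2}(A)$; since $d(x,A)=2$, no neighbor of $x$ lies in $A$, whence $N(x)\subseteq G-A$, and the edge-wise result already yields $d(y)=d(x)=:d$ for every $y\in N(x)$. For an arbitrary $z\in G-A$, $diam(G)=2$ gives $z=x$, $z\in N(x)$, or $z$ and $x$ share some common neighbor $y$. In the last case $y\in N(x)\subseteq G-A$ and $yz\in E(G)$ with $z\in G-A$, so $d(z)=d(y)=d$. Thus every vertex of $G-A$ has degree $d$, as desired.

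The main obstacle will be the first step: eliminating \Rmnum{2}-type saturating edges for every non-edge $uv_{i}$ hinges on exploiting the hypothesis $v\in G-A$ exactly, and the ensuing count requires the $K_{2,3}$-freeness of \autoref{lemma:2.3} to keep the auxiliary neighbors $w_{i}$ distinct. Once these ingredients are in place, the diameter constraint of \autoref{lemma:2.1} lets the nonempty set $N^{2}(A)$ transport the local equality of degrees across all of $G-A$.
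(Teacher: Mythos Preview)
Your proof is correct and follows the same core strategy as the paper: establish that adjacent vertices in $G-A$ have equal degree by producing, for each neighbor $v_i$ of $v$ other than $u$, a distinct auxiliary neighbor $w_i$ of $u$ (via the \Rmnum{1}-type saturating edge $uv_i$ and $K_{2,3}$-freeness). The two proofs differ in minor but notable ways. To rule out the \Rmnum{2}-type case for $uv_i$, the paper argues that $u_i$ would lie in some triangle $S$ and then $u,v\in N(S)$ would be adjacent, violating \autoref{lemma:3.1}; you instead observe directly that the unique length-$2$ path between $u$ and $v_i$ passes through $v$, so the triangle guaranteed by \autoref{lemma:2.2} would contain $v$, contradicting $v\in G-A$. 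More significantly, your propagation step is cleaner: the paper first argues that $N^2(A)$ has constant degree (splitting into cases according to whether $G[N^2(A)]$ is connected) and then separately handles vertices of $N(A)$ with and without neighbors in $N^2(A)$, whereas you fix a single $x\in N^2(A)$, note $N(x)\subseteq G-A$, and use $diam(G)=2$ to reach every other $z\in G-A$ through a common neighbor lying in $N(x)\subseteq G-A$. This avoids the case analysis entirely.
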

 \begin{proof}
All triangles in  $A$  are vertex-disjoint  since  $G$  has   no $C_{4}^{+}$  and  bowknot graph as  subgraphs.  Let $xy$  be a non-edge  in  $G-A$,   if   $xy$   is a \Rmnum{2}-type $C_{4}^{+}$-saturating edge,   then  there  exists a vertex in a triangle which does not belong to   $A$,  a contradiction.  Thus   $xy$ is an  \Rmnum{1}-type $C_{4}^{+}$-saturating edge.
Firstly we  claim that  adjacent vertices  in $G-A$  have the same degree  in $G$.

\begin{claim}
$d_{G}(u)=d_{G}(v)$  for  any two adjacent vertices  $u, v\in G-A$.
\end{claim}

\begin{proof}[Proof of Claim]
For  any two adjacent vertices  $u, v\in G-A$,   without loss of generality,  we say  $d_{G}(v)\leq d_{G}(u)$  and
let  $N(u)=\{v, u_{1},u_{2},\ldots,u_{t}\}$.  Then  $N(u)$  is an independent set  since $u\notin A$.  For  any  $i\in[t]$, $G+vu_{i}$  has exactly  one  $C_{4}^{+}$  and  $vu_{i}$  is an  \Rmnum{1}-type  $C_{4}^{+}$-saturating edge. Indeed, if $vu_{i}$  is a  \Rmnum{2}-type  $C_{4}^{+}$-saturating edge, then $u_i\in A$  and $v$ is adjacent to another vertex of the triangle $S$ containing $u_i$. But then $u,v \in N(S)$, thus $N(S)$ is not an independent set, contradicting \autoref{lemma:3.1}.

 Therefore,  there  exists a vertex $w_{i}$  such that $V(C_{4}^{+})=\{v,u_{i},u,w_{i}\}$,   $v, u_{i}\in N(w_i)$   and  $w_{i}\notin N(u)$  for  any $i\in[t]$.  Moreover,   we  say   $w_{i}\neq w_{j}$  for  any  $i\neq j$.  If  not,   without loss of generality,  we  say  $w_{1}=w_{2}$,   but then $w_{1}$  and  $u$  have three common neighbors  $v$, $u_{1}$, $u_{2}$,  which formed a $K_{2,3}$,   contradicting \autoref{lemma:2.3}.  Then we  have  $d_{G}(v)\geq t+1=d_{G}(u)$,  which  means $d_{G}(u)=d_{G}(v)$.   Thus   adjacent vertices in  $G-A$  have the same degree  in $G$.  This completes the proof of the claim.
\end{proof}

Let us return to the proof of the lemma. If  $N^{2}(A)\neq\emptyset$,   we  show  that each vertex in $N^{2}(A)$  has the same degree  in $G$.  It is clear if $G[N^{2}(A)]$   is  connected  since  $G[N^{2}(A)]\subseteq G-A$.  Now  we  assume  that  $G[N^{2}(A)]$  is  disconnected.  Let $u$ and $v$ be any two vertices  which  belong to any two components of $G[N^{2}(A)]$, respectively.   Then  $uv$ is  an  \Rmnum{1}-type  $C_{4}^{+}$-saturating edge and  the  two common  neighbors  of  $u$ and $v$,  say  $x$  and  $y$,   are in  $N(A)$.   Thus  $d_{G}(u)=d_{G}(x)=d_{G}(v)$,  it follows that each vertex in $N^{2}(A)$  has the same degree  in $G$.
 Moreover,  we  claim  that  each vertex in $G-A$  has the same degree  in $G$.  Since  adjacent vertices  $v\in N^{2}(A)$,  $u\in N(A)$  have  the  same degree in  $G$,  let $w$  be a vertex in $N(A)$  which  has  no neighbor  in  $N^{2}(A)$,    it  suffices to show  that  $w$  has  the  same degree in  $G$ with  the vertex in  $N^{2}(A)$.   For  any  $v\in N^{2}(A)$,  $G+wv$  has exactly  one  $C_{4}^{+}$  and  $uv$  is an  \Rmnum{1}-type  $C_{4}^{+}$-saturating edge.  Then   $w$  and  $v$  have  exactly  two  common neighbors  $x$, $y$  and  $x, y\in N(A)$.  Thus  $d_{G}(w)=d_{G}(x)=d_{G}(v)$.  This completes the proof.
\end{proof}

\begin{lemma}\label{lemma:3.3}
Let  $G$  be a uniquely $C_{4}^{+}$-saturated graph  on  $n$  vertices  with  $k\geq1$ triangle and  $A$  be the vertex set  of   all triangle(s)  in  $G$.  If  $N^{2}(A)\neq\emptyset$,  then $n=\frac{1}{2}t^{2}+\frac{1}{2}t+1$  where  $t$  is the degree in  $G$  of  vertex in $G-A$.
\end{lemma}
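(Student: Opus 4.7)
The plan is to pick any vertex $w\in N^{2}(A)$, which exists by hypothesis, and to compute $n$ by partitioning $V(G)=\{w\}\cup N(w)\cup N^{2}(w)$; this partition is valid because $diam(G)=2$ by \autoref{lemma:2.1}. Since $w$ lies in $G-A$, we have $d(w)=t$ by \autoref{lemma:3.2}. Moreover, $w\in N^{2}(A)$ forces $N(w)\cap A=\emptyset$, so every neighbor $w'$ of $w$ also satisfies $w'\in G-A$ and hence $d(w')=t$. Finally, $w\notin A$ implies that $w$ lies in no triangle, so $N(w)$ is an independent set.

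The crucial step is to show that every $v\in N^{2}(w)$ has exactly two common neighbors with $w$. By \autoref{lemma:2.2} the count is $1$ or $2$, and the value $1$ would force an edge of some length-$2$ path $w-x-v$ to lie in a triangle. But every triangle of $G$ has its three vertices inside $A$: a triangle on edge $wx$ would place $w\in A$, contradicting $w\in N^{2}(A)$, and a triangle on edge $xv$ would place $x\in A$, contradicting $x\in N(w)\subseteq G-A$. Hence the common-neighbor count is exactly $2$. The same reasoning shows that every pair $\{w_{i},w_{j}\}\subseteq N(w)$ shares exactly two common neighbors: $w$ itself and a further vertex $z$, which cannot lie in $N(w)$ (otherwise $\{w,w_{i},z\}$ would be a triangle containing $w$), so $z\in N^{2}(w)$.

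Combining these observations yields a bijection between $\binom{N(w)}{2}$ and $N^{2}(w)$: each pair $\{w_{i},w_{j}\}$ maps to its unique second common neighbor, and conversely each $v\in N^{2}(w)$ maps back to the pair of its two neighbors in $N(w)$. Therefore $|N^{2}(w)|=\binom{t}{2}$, and
\[
n \;=\; 1+t+\binom{t}{2} \;=\; \tfrac{1}{2}t^{2}+\tfrac{1}{2}t+1.
\]
The main obstacle is the second paragraph, where the hypothesis $w\in N^{2}(A)$ is used to exclude \Rmnum{2}-type $C_{4}^{+}$-saturating behavior for every length-$2$ path from $w$; once this local triangle-freeness around $w$ is in place, the vertex count reduces to a Moore-type computation identical to the one for strongly regular graphs with parameters $(n,t,0,2)$.
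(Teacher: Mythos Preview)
Your proof is correct and is in fact cleaner than the paper's argument. Both proofs fix a vertex $w\in N^{2}(A)$ and partition $V(G)=\{w\}\cup N(w)\cup N^{2}(w)$, but from there the approaches diverge. The paper keeps track of the number of triangles $k$, splits $N(w)$ into $U_{1}=N(w)\cap N(A)$ and $U_{2}=N(w)\setminus U_{1}$, and double-counts the edges between $N(w)$ and $X=N^{2}(w)\setminus A$; after some bookkeeping the parameters $k$ and $|U_{1}|$ cancel and the formula drops out. You instead observe directly that, because $w$ and all of $N(w)$ lie outside $A$, every non-edge from $w$ (and every non-edge inside $N(w)$) must be a \Rmnum{1}-type saturating edge, so common-neighbor counts are always exactly two; this yields a bijection $\binom{N(w)}{2}\leftrightarrow N^{2}(w)$ and the formula $n=1+t+\binom{t}{2}$ follows immediately. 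Your route makes transparent why $k$ never appears in the answer, and it is essentially the Moore-type count for a strongly regular graph with parameters $(n,t,0,2)$, which is exactly the structure that \autoref{theorem:3.1} identifies in the girth-$4$ case.
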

 \begin{proof}
Let   $S_{1}, S_{2},\ldots,S_{k}$ be the triangles in $G$  with  $V(S_{i})=\{v_{i1},v_{i2},v_{i3}\}$.   All triangles  in  $A$  are vertex-disjoint  since  $G$  has   no $C_{4}^{+}$  and  bowknot graph as  subgraphs.  If  $N^{2}(A)\neq\emptyset$,  then each vertex in $G-A$  has the same degree $t$  in $G$  by  \autoref{lemma:3.2}.
For  a  vertex $u\in N^{2}(A)$,  we have that   $A\subseteq N^{2}(u)$.   Let  $N(u)=\{u_{1}, u_{2},\ldots,u_{t}\}$  and  $N^{2}(u)=A\cup X$ with   $|X|=n-t-3k-1$.   $N(u)$  is  independent  since $u\not\in A$.  For  any  $v\in N^{2}(u)$,   $G+uv$  has exactly one $C_{4}^{+}$  and  $uv$ is an \Rmnum{1}-type  $C_{4}^{+}$-saturating edge.  Then  each  vertex in  $X\cup A$  has exactly  two  neighbors in $N(u)$.  Thus  $e[N(u), X]=2|X|=2(n-t-3k-1)$.
Let  $U_{1}=N(u)\cap N(A)$  and  $U_{2}=N(u)\backslash U_{1}$.   Then,  for  any $u_{i}\in U_{2}$,  $u_{i}$   has exactly  $t-1$  neighbors in $X$  since $u_{i}$ has degree $t$  in $G$  and  has no neighbor in $A\cup N(u)$.
For $v_{ij}\in A$,    $G+uv_{ij}$  has exactly one $C_{4}^{+}$  and  $uv_{ij}$ is an \Rmnum{1}-type $C_{4}^{+}$-saturating edge,  then  $v_{ij}$ has exactly  two  neighbors in  $U_{1}$.   Thus    $\sum\limits_{x\in A}d_{U_{1}}(x)=2|A|=6k$.  Moreover,  $\sum\limits_{x\in A}d_{U_{1}}(x)=e[A, U_{1}]=\sum\limits_{u_{i}\in U_{1}}d_{A}(u_{i})=6k$.  Since  each vertex in $G-A$  has the same degree $t$  in $G$,  we have $\sum\limits_{u_{i}\in U_{1}}d_{X}(u_{i})=e[U_{1}, X]=t|U_{1}|-6k-|U_{1}|$  and
$\sum\limits_{u_{i}\in U_{2}}d_{X}(u_{i})=e[U_{2}, X]=|U_2|(t-1)=(t-|U_{1}|)(t-1)$.
We have  $2|X|=e[N(u), X]=e[U_{1}, X]+e[U_{2}, X]$,  which implies  $2(n-t-3k-1)=(t|U_{1}|-6k-|U_{1}|)+(t-|U_{1}|)(t-1)$.  Therefore,  $n=\frac{1}{2}t^{2}+\frac{1}{2}t+1$.
 \end{proof}

\begin{theorem}\label{theorem:3.3}
Let  $G$  be  a uniquely  $C_{4}^{+}$-saturated graph   with  $k\geq1$  triangle  and  $A$  be the vertex set  of   all triangle(s)  in  $G$.  Then   $N^{2}(A)=\emptyset$.
\end{theorem}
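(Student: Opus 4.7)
The plan is to argue by contradiction: assume $N^{2}(A) \neq \emptyset$ and fix $u \in N^{2}(A)$. Since \autoref{lemma:3.3} already pins down $n = \tfrac{t(t+1)}{2}+1$ (with $t$ the common degree of vertices in $G-A$), the strategy is to derive a second, independent equation relating $n$, $t$, and $k$ that is incompatible with the first.

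The essential preparatory step is a structural cleanup: I would show that no two distinct triangles of $G$ are joined by any edge (``no cross-triangle edges''), and that no vertex of $N(A)$ is adjacent to vertices of two distinct triangles. Each is established by assuming the contrary and exhibiting, for a suitably chosen non-edge among triangle vertices, two distinct copies of $C_{4}^{+}$ in the resulting graph---typically one from a chord-type diamond whose common neighbors lie in some $V_{i'j'} \cap V_{lm'}$, and another from a $C_{3}^{*}$-plus-pendant configuration forced by the triangle the cross structure attaches to. This case analysis, resting on \autoref{lemma:2.2} and \autoref{lemma:2.3}, is the main technical obstacle. Once completed, the sets $V_{ij}$ from \autoref{lemma:3.1} are pairwise disjoint, each vertex of $N(A)$ lies in exactly one $V_{ij}$, and each $v_{ij} \in A$ has $d_{A}(v_{ij}) = 2$.

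With the structure cleaned up, two complementary expressions for $|N^{2}(A)|$ follow from direct counting. For $x \in V_{ij}$ and $v_{lm}$ with $l \neq i$, the non-edge $xv_{lm}$ must be a Type~I saturating edge (neither $x$ nor any non-triangle vertex can play the triangle role of a $C_{3}^{*}$-configuration), giving $|N(x) \cap V_{lm}| = 2$. Edge-counting between $V_{ij}$ and $V_{lm}$ then shows $|V_{ij}| = |V_{lm}| =: m$ for all $i, j$. The same Type~I analysis for $uv_{ij}$ gives $|N(u) \cap V_{ij}| = 2$ for every $v_{ij}$, hence $|N(u) \cap N(A)| = 6k$, and a degree equation yields $p := |N(x) \cap N^{2}(A)| = t - 6k + 5$ for each $x \in V_{ij}$. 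Double-counting $e[N(A), N^{2}(A)]$ then gives $|N^{2}(A)| = \tfrac{mp}{2}$. Separately, for any pair $\{x, y\} \subseteq V_{ij}$, \autoref{lemma:2.2} combined with the fact that $x, y$ lie in no triangle forces exactly two common neighbors; one is $v_{ij}$, and the other must lie in some $V_{lm'}$ with $l \neq i$ or in $N^{2}(A)$, and every such vertex has precisely two neighbors in $V_{ij}$. A two-way count of pairs then yields $\binom{m}{2} = 3(k-1)m + |N^{2}(A)|$, i.e., $|N^{2}(A)| = \tfrac{m(m - 6k + 5)}{2}$.

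Equating the two expressions for $|N^{2}(A)|$ forces $m = t$. Substituting $m = t$ into $n = |A| + |N(A)| + |N^{2}(A)| = 3k + 3km + |N^{2}(A)|$ and equating with the formula $n = \tfrac{t(t+1)}{2} + 1$ from \autoref{lemma:3.3} reduces, after a short simplification, to $4t = 2 - 6k$, i.e., $t = (1 - 3k)/2$. For every $k \geq 1$ this is strictly negative, contradicting $t \geq 0$. Hence $N^{2}(A) = \emptyset$.
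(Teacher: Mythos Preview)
Your counting steps are correct once the ``structural cleanup'' is granted, but the cleanup itself cannot be carried out as you describe, and this is a genuine gap rather than a technicality. Suppose $k\ge 2$ and both cleanup claims hold: no edge between distinct triangles, and each vertex of $N(A)$ has exactly one neighbour in $A$. Take $v_{11}\in S_1$ and $v_{21}\in S_2$. By the first claim $v_{11}v_{21}$ is a non-edge, so adding it must create a $C_4^+$. A Type~II copy would require $v_{21}$ adjacent to $S_1\setminus\{v_{11}\}$ or $v_{11}$ adjacent to $S_2\setminus\{v_{21}\}$, both forbidden by the first claim. A Type~I copy would require a common neighbour of $v_{11}$ and $v_{21}$; such a vertex lies in $N(A)$ and is adjacent to two distinct triangles, forbidden by the second claim. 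Thus no copy of $C_4^+$ is created, contradicting saturation. In other words, for $k\ge 2$ your two cleanup claims are mutually exclusive in any hypothetical counterexample, so at least one of them necessarily fails, and your later counts (for instance $|N(u)\cap N(A)|=6k$, or that $xv_{lm}$ is always Type~I) collapse. The sketch ``assume the contrary and exhibit two copies of $C_4^+$'' cannot succeed here: a vertex $x$ adjacent to both $v_{11}$ and $v_{21}$ does not by itself produce two diamonds for any single added edge, and indeed such vertices are \emph{forced} to exist once cross-triangle edges are excluded.

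The paper avoids this trap by working relative to a \emph{single} triangle $S$ rather than all of $A$. There the sets $V_i=N(v_i)\setminus S$ are already pairwise disjoint by \autoref{lemma:3.1}, so no cleanup is needed. The key step is to show that for $x\in V_1$ one has $e[x,N^2(S)]\ge s_1-1$, with equality exactly when $x$ lies in no triangle; picking a neighbour $x\in V_1$ of some $u\in N^2(A)$ (which automatically lies in no triangle) then yields $t=d_G(x)=s_1$, and similarly $s_2=s_3=t$. The resulting lower bound $n\ge 3+3t+\binom{t}{2}$ already exceeds the value $\tfrac{1}{2}t^2+\tfrac{1}{2}t+1$ from \autoref{lemma:3.3}, giving the contradiction. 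Your argument is salvageable for $k=1$ (where the cleanup is vacuous and your counts reduce to exactly this), but for $k\ge 2$ you would need to abandon the global partition of $N(A)$ and localise to one triangle as the paper does.
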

 \begin{proof}
Let   $A$  be the vertex set  of   all triangle(s)  in  $G$.   All triangles  in  $A$  are vertex-disjoint  since  $G$  has   no $C_{4}^{+}$  and  bowknot graph as  subgraphs.  Let  $S$  be the vertex set of  a triangle  in  $G$  with  $S=\{v_{1}, v_{2}, v_{3}\}$  and   $V_{i}=N(v_{i})-S$   with  $|V_{i}|=s_{i}$   for  $i\in[3]$.   By  \autoref{lemma:3.1},  $V(G)=S\cup N(S)\cup N^{2}(S)$  is a  partition  of  $V(G)$,   $V_{i}\cap V_{j}=\emptyset$  for  $i\neq j$  and  $N(S)=V_{1}\cup V_{2}\cup V_{3}$  is an  independent set  in $G$.  Moreover,    each vertex in $N^{2}(S)$  has  one  or two neighbors in each $V_{i}$   for  $i\in[3]$  by  \autoref{lemma:3.1}.
For  $i\in[3]$  and  $j\in[2]$,   we   denote  $N_{i}^{j}=\{u\in N^{2}(S): |N(u)\cap V_{i}|=j\}$,   then  $N^{2}(S)=N_{1}^{1}\cup N_{1}^{2}=N_{2}^{1}\cup N_{2}^{2}=N_{3}^{1}\cup N_{3}^{2}$.      Thus,  for $i\in[3]$,   we  have $|N_{i}^{2}|\leq\binom{s_{i}}{2}$.

Let  $V_{1}=\{x_{1}, x_{2},\ldots,x_{s_{1}}\}$.  For  any  non-edge $x_{i}x_{j}$,  we claim that  $x_{i}x_{j}$ is an \Rmnum{1}-type $C_{4}^{+}$-saturating edge.   If  not,  let  $x_{1}x_{2}$  be a \Rmnum{2}-type $C_{4}^{+}$-saturating edge   and   $V(C_{4}^{+})=\{x_{1}, x_{2}, v_{1}, x_{i}\}$,  then  $x_{1}x_{i}$  or  $x_{2}x_{i}$  is an edge  in  $V_{1}$  as  a contradiction.  Thus  there  are  $\binom{s_{1}}{2}$  \Rmnum{1}-type $C_{4}^{+}$-saturating edges  $\{x_{i}x_{j}: i\neq j\}$   in  $V_{1}$  and    $v_{1}\in V(C_{4}^{+})$  in  $G+x_{i}x_{j}$  is a common neighbor of  $x_{i}$  and $x_{j}$.   Since  each  vertex in  $V_{1}$  has no neighbor in  $N(S)\cup\{v_{2}, v_{3}\}$  by  \autoref{lemma:3.1},  we  have that,  for  any  $i\neq j$,  there exists exactly  one vertex in $N^{2}(S)$   such  that  it  belongs  to  $V(C_{4}^{+})$  in  $G+x_{i}x_{j}$  and is another  common neighbor of  $x_{i}$  and $x_{j}$ in addition to $v_{1}$,  which  means  $|N_{1}^{2}|\geq\binom{s_{1}}{2}$.   Thus  $|N_{1}^{2}|=\binom{s_{1}}{2}$  and
$e[x, N^{2}(S)]\geq s_{1}-1$  for  any  $x\in V_{1}$,   equality  holds if and only if  $x$  has no neighbor in $N_{1}^{1}$.   Similarly,  we have  that,  for  $i=2, 3$,   $|N_{i}^{2}|=\binom{s_{i}}{2}$  and
$e[x, N^{2}(S)]\geq s_{i}-1$  for  any  $x\in V_{i}$,   equality  holds if and only if  $x$  has no neighbor in $N_{i}^{1}$.

Let  $V_{1}'=\{x\in V_{1}: N(x)\cap N_{1}^{1}\neq\emptyset\}$,  we claim  that,  for  a vertex  $x$  in  $V_{1}$,   $x\in V_{1}'$  if  and  only if  $x\in K_{3}$  in $G$.  If  $x\in V_{1}'$,  there exists $u\in N_{1}^{1}$  adjacent  to  $x$.   $G+v_{1}u$  has  exactly one $C_{4}^{+}$  and   $v_{1}u$ is  a  \Rmnum{2}-type $C_{4}^{+}$-saturating edge since $u$ has only one common neighbor with $v$.  Then  there  exists $u'\in N^{2}(S)$  adjacent to $u$,  $x$  and  $V(C_{4}^{+})=\{v_{1}, u, x,  u'\}$,   which  means  $x\in K_{3}$.   On the other hand,   if  $x\in V_{1}$  belongs  to a $K_{3}$  in $G$,  then  there exist vertices $u_{1}$,  $u_{2}$  in  $N^{2}(S)$   with  $V(K_{3})=\{x, u_{1}, u_{2}\}$  since  $N(S)$  is   independent.  Moreover,   $u_{1}$,  $u_{2}\in N_{1}^{1}$.  Otherwise,  say $u_{1}$  has  another neighbor  $x'$  in  $V_{1}$  in addition to  $x$.  But $G+xx'$  has two $C_{4}^{+}$  with  vertex  sets  $\{x, x', v_{1}, u_{1}\}$  and  $\{x, x', u_{1}, u_{2}\}$   as a  contradiction.  Thus  $u_{1}$,  $u_{2}\in N_{1}^{1}$,  which  implies   $x\in V_{1}'$.  Therefore,  for  a vertex $x$  in  $V_{1}$,   we have  $x\in V_{1}'$  if  and  only if  $x\in K_{3}$  in $G$.   Similarly,  we  denote  $V_{2}'=\{y\in V_{2}: N(y)\cap N_{2}^{1}\neq\emptyset\}$  and  $V_{3}'=\{z\in V_{3}: N(z)\cap N_{3}^{1}\neq\emptyset\}$.  Then  for  a vertex  $y$ in  $V_{2}$,    $y\in V_{2}'$  if  and  only if  $y\in K_{3}$  in $G$   and   for  a vertex  $z$ in  $V_{3}$,    $z\in V_{3}'$  if  and  only if  $z\in K_{3}$  in $G$.

If  $N^{2}(A)\neq\emptyset$,  let  $u\in N^{2}(A)$,  then  $u\in N^{2}(S)$.  We  say $x\in N(u)\cap V_{1}$,  $y\in N(u)\cap V_{2}$  and  $z\in N(u)\cap V_{3}$.  Moreover,  $x$, $y$  and $z$  are not in any $K_{3}$  in $G$  since   $u\in N^{2}(A)$.  Then  we have   $x\notin V_{1}'$,  $y\notin V_{2}'$   and  $z\notin V_{3}'$,   which  implies  $e[x, N^{2}(S)]=s_{1}-1$,  $e[y, N^{2}(S)]=s_{2}-1$  and  $e[z, N^{2}(S)]=s_{3}-1$.   Since  $x\in G-A$,  $x$  has  degree $t$  in $G$   by  \autoref{lemma:3.2}.  Then  $t=d_{G}(x)=d_{N^{2}(S)}(x)+d_{S}(x)=s_{1}$   since  $N(S)$  is   independent,  which  means $s_{1}=t$.      Similarly,  we have  $s_{2}=t$  and  $s_{3}=t$.  Then   $n=|S|+|V_1|+|V_2|+|V_3|+|N_{1}^{2}|+|N_{1}^{1}|=3+3t+\binom{t}{2}+|N_{1}^{1}|=\frac{1}{2}t^{2}+\frac{5}{2}t+3+|N_{1}^{1}|>\frac{1}{2}t^{2}+\frac{1}{2}t+1$,  contradicting  \autoref{lemma:3.3}.   Thus  $N^{2}(A)=\emptyset$.
\end{proof}

\begin{theorem}\label{theorem:3.4}
Let  $G$  be a uniquely $C_{4}^{+}$-saturated graph  on  $n$  vertices  with  $k\geq2$ triangles  and  $A$  be the vertex set  of   all triangles in $G$.  Let $t=min\{k-|N_{G}(u)\cap A|: u\in N(A)\}$,  then $n\leq\frac{1}{2}k^2+\frac{10t+5}{2}k+\frac{25t^2+7t}{2}+1$.
\end{theorem}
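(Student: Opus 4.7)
The plan is to invoke \autoref{theorem:3.3}, which forces $V(G) = A \cup N(A)$ with $|A| = 3k$, so that $n = 3k + |N(A)|$ and the task reduces to bounding $|N(A)|$. I would fix $u^{*} \in N(A)$ achieving the minimum in the definition of $t$, so $|N_{G}(u^{*}) \cap A| = k - t$. Call the $k - t$ triangles touched by $N(u^{*})$ ``close'' (each contributes exactly one vertex to $N(u^{*})$, by the bowknot ban in \autoref{lemma:2.3}) and the remaining $t$ triangles ``far''. I would then partition $N(A) \setminus \{u^{*}\}$ into $N(u^{*}) \cap N(A)$ and $N(A) \setminus N_{G}[u^{*}]$ and bound each piece separately.

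For the first piece, uniqueness of saturation pins down the structure. Suppose $u^{*} \sim v_{i1}$ in a close triangle $S_{i}$. If the non-edge $u^{*}v_{i2}$ had a common neighbor $x$ besides $v_{i1}$, then $G + u^{*}v_{i2}$ would carry two distinct copies of $C_{4}^{+}$, one on $\{u^{*}, v_{i1}, v_{i2}, v_{i3}\}$ and another on $\{u^{*}, v_{i1}, v_{i2}, x\}$ (the latter missing edge $v_{i1}x$, which is forced to be a non-edge since otherwise $u^{*}$ would lie in a triangle), contradicting unique saturation. Combined with independence of $V_{i1}$ from \autoref{lemma:3.1}, this kills $N(u^{*})$ on $N(S_{i}) \cap N(A)$ for every close $S_{i}$. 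For a far $S_{j}$ and $\ell \in [3]$, the non-edge $u^{*}v_{j\ell}$ cannot be Type II (no triangle can share an edge with a length-$2$ path having $u^{*}$ and $v_{j\ell}$ as endpoints, because any middle vertex lies in $V_{j\ell} \subseteq N(A) \setminus A$ and $u^{*} \notin A$), so it must be Type I, forcing $|N(u^{*}) \cap V_{j\ell}| = 2$. Summing yields $|N(u^{*}) \cap N(A)| \leq 6t$ and hence $d(u^{*}) \leq (k - t) + 6t = k + 5t$.

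The crux is the second piece. For $x \in N(A) \setminus N_{G}[u^{*}]$, the non-edge $u^{*}x$ must be Type I (neither endpoint is in a triangle), so there are exactly two common neighbors $\{a, b\} \subseteq N(u^{*})$. I would then argue that the map $x \mapsto \{a, b\}$ is injective: if $x_{1} \neq x_{2}$ both map to the same pair $\{a, b\}$, then $ab$ is necessarily a non-edge (else $\{u^{*}, a, b\}$ would be a triangle containing $u^{*} \notin A$), and adding $ab$ produces two distinct copies of $C_{4}^{+}$, on $\{u^{*}, a, b, x_{1}\}$ and on $\{u^{*}, a, b, x_{2}\}$ (each is $K_{4}$ minus the non-edge $u^{*}x_{i}$), again contradicting unique saturation. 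Therefore $|N(A) \setminus N_{G}[u^{*}]| \leq \binom{d(u^{*})}{2} \leq \binom{k + 5t}{2}$.

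Combining the two bounds gives $n = 3k + |N(A)| \leq 3k + 1 + 6t + \binom{k + 5t}{2}$, which on expansion equals $\tfrac{1}{2}k^{2} + \tfrac{10t + 5}{2}k + \tfrac{25t^{2} + 7t}{2} + 1$. The step I expect to fight hardest is the injectivity argument: one must verify both that the two candidate copies of $C_{4}^{+}$ in $G + ab$ are genuine (five edges on four vertices, with the missing edge being $u^{*}x_{i}$ in each) and genuinely distinct, and that the forced non-edge conclusion $ab \notin E(G)$ holds regardless of whether $a, b$ lie in $A$ or in $N(A)$. This is the place where the hypothesis $u^{*} \in N(A) \setminus A$, and consequently the non-membership of $u^{*}$ in any triangle, really earns its keep.
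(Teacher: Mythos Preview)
Your proof is correct and follows essentially the same approach as the paper: invoke \autoref{theorem:3.3} to get $V(G)=A\cup N(A)$, fix a vertex $u^*\in N(A)$ realizing $t$, bound $d(u^*)\le k+5t$ by analyzing close versus far triangles, and then bound $|N(A)\setminus N_G[u^*]|\le\binom{k+5t}{2}$ via an injectivity argument. The only cosmetic differences are that (i) the paper handles the close-triangle step in one line by citing the bowknot/house-graph ban (equivalently, independence of $N(S_i)$ immediately kills all of $N(u^*)\cap N(S_i)$ once $u^*\in N(S_i)$, so your separate treatment of $V_{i2},V_{i3}$ is more work than needed), and (ii) for injectivity the paper observes directly that two vertices $x_1,x_2$ sharing the pair $\{a,b\}$ would give a $K_{2,3}$ on $\{a,b\}$ versus $\{u^*,x_1,x_2\}$, invoking \autoref{lemma:2.3}, whereas you add the non-edge $ab$ and exhibit two copies of $C_4^+$---your route is valid but the $K_{2,3}$ shortcut is quicker.
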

\begin{proof}
Let  $G$  be a uniquely $C_{4}^{+}$-saturated graph  on  $n$  vertices  with  $k\geq2$ triangles $S_{1}, S_{2},\ldots,S_{k}$ with $V(S_{i})=\{v_{i1},v_{i2},v_{i3}\}$  and  $A=\bigcup\limits_{i=1}^{k}V(S_{i})$.  Let  $V_{ij}=N_{G}(v_{ij})-A$  for  $i\in[k]$  and  $j\in[3]$, then  $N(A)=\bigcup\limits_{i=1}^{k}\big(\bigcup\limits_{j=1}^{3}V_{ij}\big)$.   By  \autoref{theorem:3.3},  $V(G)$  has a  partition  $V(G)=A\cup N(A)$.  Since  $G$  has  no $C_{4}^{+}$,  any vertex in $N(A)$  has  at most $k$  neighbors in $A$,  that is,  any vertex in $N(A)$  has  at most one  neighbor in  each  triangle in $A$.
Since  $t=min\{k-|N_{G}(u)\cap A|: u\in N(A)\}$,  there is a vertex $u\in N(A)$  with $k-t$  neighbors in $A$.  Moreover, let $N_{G}(u)\cap A=\{v_{(t+1)1},v_{(t+2)1},\ldots,v_{k1}\}$.  For   $i\in[t]$ and  $j\in[3]$,   we  can see  that  $u$  has exactly two neighbors in $V_{ij}$  since  $uv_{ij}$  is an \Rmnum{1}-type  $C_{4}^{+}$-saturating edge.  For  $t+1\leq i\leq k$  and  $j\in[3]$,  we claim that $u$  has no neighbor in $V_{ij}$.  If  not,  say $u$  is adjacent to a vertex in $V_{(t+1)j}$,  then  $G$  has a  bowknot graph   or  house graph,  contradicting  \autoref{lemma:2.3}.  Thus  $|N(A)\cap N_{G}(u)|\leq6t$,  which  means $|N_{G}(u)|\leq(k-t)+6t=k+5t$.  For  any vertex $u'\in N(A)\backslash N_{G}(u)$,   we  have that  $uu'$ is an \Rmnum{1}-type $C_{4}^{+}$-saturating edge,  thus  $u'$   has exactly  two  neighbors in $N_{G}(u)$. Two vertices of $N(A)\backslash N_{G}(u)$ cannot have the same two neighbors in $N_{G}(u)$, as those vertices with $u$ would form a $K_{2,3}$, contradicting \autoref{lemma:2.3}. Therefore,
$|N(A)\backslash N_{G}(u)|\leq\binom{k+5t}{2}$,  which means $n=3k+|N(A)|\leq3k+1+6t+\binom{k+5t}{2}=\frac{1}{2}k^2+\frac{10t+5}{2}k+\frac{25t^2+7t}{2}+1$.  Our result follows immediately.
\end{proof}

\begin{remark}\label{remark:3.2}
For  \autoref{theorem:3.4},  the upper bound of order $n$  is increasing with respect to $t$.  If $t=0$,    then  there exists a vertex $u\in N(A)$  having $k$  neighbors in $A$.  Moreover,  we have $d_{G}(u)=k$.  And  the upper bound of order $n$  is $\frac{1}{2}k^2+\frac{5}{2}k+1$.  In the opposite side,  if  $t=k-1$,  then any vertex in $N(A)$  has exactly one neighbor in $A$.   The upper bound of order $n$  is $18k^2-24k+10$.
\end{remark}

By   \autoref{theorem:3.4} and   \autoref{remark:3.2},  we  can  get that  there are no uniquely $C_{4}^{+}$-saturated graphs with  $k\geq2$ triangles and large  orders.

\begin{theorem}\label{theorem:3.5}
For $n>18k^2-24k+10$  with  $k\geq2$,   there are no uniquely $C_{4}^{+}$-saturated graphs  on  $n$  vertices  with  $k$ triangles.
\end{theorem}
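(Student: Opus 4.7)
The plan is to derive the bound as a direct corollary of \autoref{theorem:3.4} together with the structural observations already established, by showing that the parameter $t$ is confined to the range $\{0,1,\ldots,k-1\}$ and that the upper bound on $n$ given by \autoref{theorem:3.4} is monotonically increasing in $t$, so the extreme value at $t=k-1$ governs the worst case.

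First I would observe that $t$ is a nonnegative integer. By the definition $t = \min\{k-|N_G(u)\cap A| : u\in N(A)\}$, any $u\in N(A)$ has, by definition of $N(A)$, at least one neighbor in $A$, so $|N_G(u)\cap A|\geq 1$ and hence $k-|N_G(u)\cap A|\leq k-1$. Moreover, since $G$ is $C_4^+$-free and every triangle in $A$ is vertex-disjoint (as noted in the proof of \autoref{theorem:3.3}), a vertex $u\in N(A)$ can have at most one neighbor in each triangle of $A$, so $|N_G(u)\cap A|\leq k$ and therefore $t\geq 0$. Thus $t\in\{0,1,\ldots,k-1\}$.

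Next I would note that the function
\begin{equation*}
f(t) \;=\; \frac{1}{2}k^2 + \frac{10t+5}{2}k + \frac{25t^2+7t}{2} + 1
\end{equation*}
appearing on the right-hand side of \autoref{theorem:3.4} is a quadratic in $t$ with positive leading coefficient and positive linear coefficient (since $k\geq 2$), and hence is strictly increasing for $t\geq 0$. Therefore, for any allowable $t$, we have $f(t)\leq f(k-1)$, and a direct substitution (as already recorded in \autoref{remark:3.2}) gives
\begin{equation*}
f(k-1) \;=\; 18k^2 - 24k + 10.
\end{equation*}

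Combining these two observations with \autoref{theorem:3.4} yields $n\leq 18k^2 - 24k + 10$ for every uniquely $C_4^+$-saturated graph with $k\geq 2$ triangles. Contrapositively, if $n>18k^2 - 24k + 10$, no such graph can exist, which is the desired conclusion. The argument is essentially bookkeeping; the only point that requires attention is verifying the bounds on $t$ and the monotonicity of $f$, both of which are immediate, so I do not anticipate a substantive obstacle beyond these checks.
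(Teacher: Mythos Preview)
Your argument is correct and follows exactly the paper's approach: the paper derives \autoref{theorem:3.5} directly from \autoref{theorem:3.4} together with \autoref{remark:3.2}, which records precisely the monotonicity of the bound in $t$ and its value $18k^2-24k+10$ at the extreme $t=k-1$. Your additional explicit verification that $t\in\{0,\ldots,k-1\}$ and that $f$ is increasing on this range is a welcome elaboration of what the paper leaves implicit.
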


By   \autoref{theorem:3.2} and   \autoref{theorem:3.3},  we  can  get the following  result about uniquely $C_{4}^{+}$-saturated graphs with  one  triangle immediately.

\begin{theorem}\label{theorem:3.6}
$C_{3}^{*}$ is the  only nontrivial  uniquely  $C_{4}^{+}$-saturated graph with  one  triangle.
\end{theorem}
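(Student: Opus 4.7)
The plan is to obtain the statement as an immediate corollary of the two preceding results, \autoref{theorem:3.2} and \autoref{theorem:3.3}. The biconditional has two very asymmetric directions: one is a short direct verification, the other a packaging of the structural machinery already developed in the section.

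For the easy direction, I would check that $C_{3}^{*}$ itself is a nontrivial uniquely $C_{4}^{+}$-saturated graph with exactly one triangle. Writing $V(C_{3}^{*}) = \{a,b,c,d\}$ with triangle on $\{a,b,c\}$ and pendant edge $cd$, the only non-edges are $ad$ and $bd$. Adding either one produces $K_{4}$ minus a single edge, which is exactly one copy of $C_{4}^{+}$, so $C_{3}^{*}$ is uniquely $C_{4}^{+}$-saturated, and it clearly has exactly one triangle. It is nontrivial because $|V(C_{3}^{*})| = 4$ matches the number of vertices of $C_{4}^{+}$.

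For the converse, let $G$ be a nontrivial uniquely $C_{4}^{+}$-saturated graph containing a single triangle with vertex set $S$. Because $S$ is the only triangle, the vertex set $A$ of all triangles of $G$ equals $V(S) = S$. Applying \autoref{theorem:3.3} with $k=1$ yields $N^{2}(A) = N^{2}(S) = \emptyset$. Since $G$ contains a triangle, $g(G) = 3$. The hypotheses of \autoref{theorem:3.2} are now in place, so $G \cong C_{3}^{*}$.

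The main difficulty has already been absorbed into \autoref{theorem:3.3}, whose proof rules out the existence of vertices at distance $2$ from every triangle by the degree/counting argument exploiting \autoref{lemma:3.3}. The only remaining subtlety is the small bookkeeping step of identifying $A$ with $S$ when $k=1$, so that the conclusion of \autoref{theorem:3.3} feeds directly into the hypothesis of \autoref{theorem:3.2}. No new case analysis or extremal counting is required for this theorem beyond what has already been developed.
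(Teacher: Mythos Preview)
Your proposal is correct and matches the paper's approach exactly: the paper states the theorem as an immediate consequence of \autoref{theorem:3.2} and \autoref{theorem:3.3}, which is precisely the packaging you describe. Your added verification that $C_{3}^{*}$ is itself nontrivial uniquely $C_{4}^{+}$-saturated is a small but welcome completeness check that the paper leaves implicit.
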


There exists  exactly  one  nontrivial  uniquely $C_{4}^{+}$-saturated graph  with  one  triangle.  In the following results,   we prove that there  are no  uniquely  $C_{4}^{+}$-saturated graphs  with  two, three  or four triangles.

\begin{theorem}\label{theorem:3.7}
There  are no uniquely  $C_{4}^{+}$-saturated graphs  with  two  or  three  triangles.
\end{theorem}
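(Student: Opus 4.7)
My plan is to argue by contradiction: assume $G$ is a nontrivial uniquely $C_4^+$-saturated graph with $k\in\{2,3\}$ triangles $S_1,\dots,S_k$, and set $A=\bigcup_i V(S_i)$. By \autoref{theorem:3.3}, $V(G)=A\cup N(A)$, and the $S_i$ are pairwise vertex-disjoint since $G$ contains no bowknot. The first step is to pin down the \emph{cross-edges} $v_{ij}v_{i'j'}$ with $i\ne i'$: if $S_i$ and $S_{i'}$ were joined by two cross-edges, then an edge of $S_i$ together with the two cross-edges and an edge of $S_{i'}$ would either form a $C_4^+$ or produce a new triangle violating vertex-disjointness. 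Hence each pair $(S_i,S_{i'})$ carries at most one cross-edge, whose endpoints are the unique vertices of the two triangles sending edges to the opposite triangle.

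For $k=2$ I split on whether the single allowed cross-edge is present. Without a cross-edge, every non-edge $v_{1i}v_{2j}$ is a Type~\Rmnum{1} saturating edge (a Type~\Rmnum{2} edge would demand a triangle using one of $v_{1i},v_{2j}$ with an external vertex and a pendant, spawning either a third triangle or a forbidden cross-edge), so $v_{1i}$ and $v_{2j}$ share exactly two common neighbors in $N(A)$; each such neighbor is adjacent to exactly one vertex in $S_1$ and one in $S_2$ (otherwise a $K_{2,3}$ or an extra triangle appears, violating \autoref{lemma:2.3}). A double count over the nine pairs then forces at least eighteen ``mixed'' vertices in $N(A)$, after which the Type~\Rmnum{1} common-neighbor requirements on the remaining non-edges inside $N(A)$ overflow the capacity allowed by \autoref{theorem:3.4}. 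With a cross-edge $v_{11}v_{21}$, the six non-edges $v_{11}v_{2j}$ and $v_{1i}v_{21}$ with $i,j\ne 1$ are Type~\Rmnum{2}, witnessed by $S_1$ or $S_2$ together with the cross-edge; uniqueness forbids any alternative $C_3^*$ witness, which pins down the neighborhoods of $v_{1i},v_{2j}$ for $i,j\ne 1$ so tightly that the remaining four non-edges $v_{1i}v_{2j}$ ($i,j\ne 1$) are Type~\Rmnum{1} with overlapping common-neighbor sets, forcing either a $K_{2,3}$ or a duplicated $C_4^+$.

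The case $k=3$ follows the same template, enumerating the cross-edge configurations on $\{S_1,S_2,S_3\}$ (finitely many up to symmetry). In each sub-case, the Type~\Rmnum{1}/\Rmnum{2} dichotomy together with \autoref{lemma:2.3} tightly controls the common-neighbor structure on $N(A)$; a global count comparing the number of Type~\Rmnum{1} saturating edges against the capacity of $N(A)$-vertices (bounded further by \autoref{theorem:3.4}) rules out each configuration. The main obstacle is keeping the case analysis organized: with three triangles the cross-edge graph has several inequivalent configurations, and the hardest is the one where all three pairs are linked by cross-edges, since that structure most resembles the rigid strongly regular graphs of \autoref{theorem:3.1}. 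There, the contradiction will likely come from a parity or double-counting identity balancing the Type~\Rmnum{1} and Type~\Rmnum{2} saturating-edge counts against the uniform degree-$3$ prescribed on $A$ by the triangles.
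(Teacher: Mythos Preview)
Your proposal is an outline rather than a proof, and the parts you flag as hardest are genuinely unfinished. For $k=2$ with a cross-edge you miscount (there are four non-edges $v_{11}v_{2j},\,v_{1i}v_{21}$ with $i,j\ne 1$, not six), and your concluding step---that the four Type~\Rmnum{1} pairs $v_{1i}v_{2j}$ with $i,j\in\{2,3\}$ have ``overlapping common-neighbor sets, forcing either a $K_{2,3}$ or a duplicated $C_4^+$''---is false as stated: those eight common neighbors are pairwise distinct, since each lies in a unique $V_{1i}\cap V_{2j}$. (A count against \autoref{theorem:3.4} with $t=0$ would in fact close both $k=2$ subcases, but that is not the argument you wrote.) For $k=3$ you give no argument beyond the hope that ``the contradiction will likely come from a parity or double-counting identity''; note that the bound in \autoref{theorem:3.4} is far too weak once $t\ge 1$, so a bare global count does not obviously suffice, and you have not indicated how the promised case analysis is actually carried out.

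The paper avoids your case split on cross-edges entirely. Since $e[S_i,S_j]\le 1$, one can always choose vertices in each triangle that send no edge to the other triangles; the paper works only with those. For $k=2$ it takes $v_{12},v_{22},v_{23}$, obtains common neighbors $x_1,x_2\in N(A)$ of $v_{12},v_{22}$ and $x_3,x_4\in N(A)$ of $v_{12},v_{23}$, and then looks at the \emph{second-generation} non-edge $x_1x_3$: this must be Type~\Rmnum{1}, so $x_1,x_3$ have a second common neighbor $x\ne v_{12}$; one checks $x\in N(A)$, whence the edge $xx_1$ (or $xx_3$) lies inside some $N(S_i)$, contradicting \autoref{lemma:3.1}. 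For $k=3$ the paper picks $v_{i1}\in S_i$ free of all cross-edges, finds common neighbors $x_1,x_2$ of $v_{11},v_{21}$ and $y_1,y_2$ of $v_{11},v_{31}$ in $N(A)$, and examines the non-edge $x_1y_2$: its second common neighbor cannot lie in $N(A)$ (same independence argument) and cannot be $v_{21}$ or $v_{31}$ without creating a $K_{2,3}$. The idea you are missing is this second step---after locating common neighbors in $N(A)$, study the saturating edge \emph{between two of them} to force an edge inside $N(S_i)$.
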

 \begin{proof}
Let  $G$  be  a uniquely  $C_{4}^{+}$-saturated graph   with   two  triangles  $S_{1}=\{v_{11}, v_{12}, v_{13}\}$, $S_{2}=\{v_{21}, v_{22}, v_{23}\}$   and  $A=V(S_{1})\cup V(S_{2})$.  By  \autoref{theorem:3.3},  $V(G)$  has a partition
$V(G)=A\cup N(A)$.  For   any  $u\in N(A)$,  $u$  has  at  most one  neighbor in  $S_{1}$  and  $S_{2}$  since  $G$  is  $C_{4}^{+}$-free.   We  have  $e[S_{1}, S_{2}]\leq1$  since  $G$  has  no $C_{4}^{+}$  and  house graph  as subgraphs. Without loss of generality,  we  say  $E[S_{1}, S_{2}]=\{v_{11}v_{21}\}$   if  $e[S_{1}, S_{2}]=1$.
There  exist  at least  four non-edges $\{v_{12}v_{22},v_{12}v_{23},v_{13}v_{22},v_{13}v_{23}\}$  between  $S_{1}$  and  $S_{2}$  as  \Rmnum{1}-type $C_{4}^{+}$-saturating edges.   Let   $V(C_{4}^{+})=\{v_{12}, v_{22},x_{1},x_{2}\}$  in  $G+v_{12}v_{22}$   and  $V(C_{4}^{+})=\{v_{12}, v_{23},x_{3},x_{4}\}$  in  $G+v_{12}v_{23}$.  $G+x_{1}x_{3}$   has exactly one $C_{4}^{+}$  and $x_{1}x_{3}$ is an \Rmnum{1}-type $C_{4}^{+}$-saturating edge  since $G$ has exactly two triangles $S_{1},S_{2}$.   Let $\{x_{1}, x_{3},v_{12},x\}$ be the vertex set of the $C_{4}^{+}$ in $G+x_{1}x_{3}$, then $x$  is  adjacent to $x_{1}, x_{3}$. Since $x\neq v_{12}$ has a common neighbor with $v_{12}$, $x$ cannot be in $S_{1}$. Since $x$ has common neighbor $x_1$ with $v_{22}$ and common neighbor $x_3$ with $v_{23}$, $x$ cannot be in $S_{2}$, thus
$x\in N(A)$.  However, $x$ has a neighbor in $N(S_{1})$ and a neighbor in $N(S_{2})$, thus  $N(S_{1})$ or $N(S_{2})$ is not an independent set,   contradicting   \autoref{lemma:3.1}.

Let  $G$  be now  a uniquely  $C_{4}^{+}$-saturated graph   with  three  triangles  $S_{1}=\{v_{11}, v_{12}, v_{13}\}$, $S_{2}=\{v_{21}, v_{22}, v_{23}\}$,  $S_{3}=\{v_{31}, v_{32}, v_{33}\}$   and  $A=V(S_{1})\cup V(S_{2})\cup V(S_{3})$.  By  \autoref{theorem:3.3},  $V(G)=A\cup N(A)$. For   any  $u\in N(A)$,  $u$  has  at  most one  neighbor in  $S_{1}$,  $S_{2}$  and  $S_{3}$  since  $G$  is  $C_{4}^{+}$-free.   We  have $e[S_{i}, S_{j}]\leq1$  for  any  $i\neq j$  since  $G$  has  no $C_{4}^{+}$  and  house graph  as subgraphs.  Without loss of generality,  we  say  that  $v_{11}\in S_{1}$  has no neighbor  in  $S_{2}\cup S_{3}$,  $v_{21}\in S_{2}$  has no neighbor  in  $S_{1}\cup S_{3}$  and  $v_{31}\in S_{3}$  has no neighbor  in  $S_{1}\cup S_{2}$.     Then  $G+v_{11}v_{21}$,  $G+v_{11}v_{31}$    have  exactly  one  $C_{4}^{+}$  and   $v_{11}v_{21}$,  $v_{11}v_{31}$  are   two \Rmnum{1}-type $C_{4}^{+}$-saturating edges,   which  implies that $v_{11}$,  $v_{21}$  have  exactly  two  common  neighbors,  say  $x_{1}$, $x_{2}$,  in  $N(A)$  and   $v_{11}$,  $v_{31}$  have  exactly  two  common  neighbors,  say  $y_{1}$, $y_{2}$,  in  $N(A)$.  Without loss of generality,   we can assume  $x_1\neq y_2$.    If not,  $x_{1}$ and $x_{2}$  are also the common neighbors of $v_{11}$ and $v_{31}$,  but  then  $x_{1}$ and $x_{2}$  have three common neighbors $v_{11}$, $v_{21}$, $v_{31}$,  forming a $K_{2,3}$, which contradicts to \autoref{lemma:2.3}.   Observe that $x_{1}y_{2}$ is a non-edge, otherwise they would form a triangle with $v_{11}$. By the $C_{4}^{+}$-saturated property, there is a common neighbor $z$ of $x_1$ and $y_2$ different from $v_{11}$. If $z\in N(A)$, then $z\in N(S_{i})$ for some $i\in[3]$, but $N(S_i)$ contains $x_{1}$ or $y_{2}$, thus there is an edge inside $N(S_{i})$, contradicting  \autoref{lemma:3.1}. Therefore, $z\in A$. The only possibilities are $v_{21}$ and $v_{31}$, since each of $x_1$ and $y_2$  has at most one neighbor in each triangle.
If  $z=v_{21}$,  then  $y_{2}$  is the third common neighbor of $v_{11}$ and $v_{21}$ different from $x_{1}$ and  $x_{2}$  and they  would  form a $K_{2,3}$,    contradicting \autoref{lemma:2.3}.    Similarly,  if  $z=v_{31}$,   then   $v_{11}$ and $v_{31}$  have three  common neighbors  $y_{1}$, $y_{2}$, $x_{1}$, forming a $K_{2,3}$ again and contradicting \autoref{lemma:2.3}.
\end{proof}



\begin{theorem}
    There is no uniquely $C_4^+$-saturated graphs with four triangles.
\end{theorem}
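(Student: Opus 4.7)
The plan is to mimic the argument used in \autoref{theorem:3.7} for three triangles, with an additional case analysis forced by the presence of a fourth triangle. Suppose for contradiction that $G$ is a uniquely $C_4^+$-saturated graph with exactly four triangles $S_1, S_2, S_3, S_4$, and write $V(S_i) = \{v_{i1}, v_{i2}, v_{i3}\}$ and $A = \bigcup_{i=1}^{4} V(S_i)$. By \autoref{theorem:3.3}, $V(G) = A \cup N(A)$, and by \autoref{lemma:2.3} the four triangles are pairwise vertex-disjoint with $e[S_i, S_j] \leq 1$ for every $i \neq j$.

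First, I would focus on three of the four triangles, say $S_1, S_2, S_3$, and select $v_{i1} \in S_i$ (for $i \in [3]$) having no neighbor in $S_j$ for $j \in [3] \setminus \{i\}$. Such a choice exists because each $S_i$ contributes at most two inter-triangle edges among $S_1, S_2, S_3$ (one per pair), leaving at least one of its three vertices free. These $v_{i1}$ may still have neighbors in $S_4$, but this does not interfere with the next step: each of $v_{11}v_{21}$ and $v_{11}v_{31}$ is a non-edge and is \Rmnum{1}-type $C_4^+$-saturating, because \Rmnum{2}-type saturation via $S_1, S_2, S_3$ is excluded by the no-neighbor condition and \Rmnum{2}-type saturation via $S_4$ is impossible since none of $v_{11}, v_{21}, v_{31}$ lies in $V(S_4)$. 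Hence $(v_{11}, v_{21})$ has exactly two common neighbors $x_1, x_2 \in N(A)$ and $(v_{11}, v_{31})$ has exactly two common neighbors $y_1, y_2 \in N(A)$; we may assume $x_1 \neq y_2$ exactly as in \autoref{theorem:3.7}.

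The non-edge $x_1 y_2$ must be $C_4^+$-saturating with $v_{11}$ a common neighbor, so $x_1$ and $y_2$ share a second common neighbor $w$. By \autoref{lemma:3.1}(iii), $w \notin N(A)$, and since $x_1, y_2$ each have at most one neighbor per triangle, $w \in V(S_2) \cup V(S_3) \cup V(S_4)$. The possibilities $w \in V(S_2) \cup V(S_3)$ lead to a $K_{2,3}$ just as in the three-triangle proof, contradicting \autoref{lemma:2.3}. The genuinely new case is $w = v_{4\ell}$ for some $\ell \in [3]$, and closing this case is the main obstacle of the proof.

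To handle $w = v_{4\ell}$, I would iterate the \Rmnum{1}-type saturation argument to the non-edge $v_{21}v_{31}$ (which is also \Rmnum{1}-type saturating by the same reasoning), obtaining another pair of common neighbors $z_1, z_2 \in N(A)$, and I would examine the neighborhood structure of the enlarged set $\{x_1, x_2, y_1, y_2, z_1, z_2, v_{4\ell}\}$. The goal is to accumulate three common neighbors between some pair of vertices, producing a $K_{2,3}$ forbidden by \autoref{lemma:2.3}, or alternatively to force either a bowknot or house subgraph, an edge inside some $N(S_i)$ contradicting \autoref{lemma:3.1}(iii), or a fifth triangle contradicting $k = 4$. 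The delicate part is the case-by-case bookkeeping, enumerating subcases by which of $x_i, y_j, z_r$ are adjacent to $v_{4\ell}$ and by the inter-triangle edges incident to $S_4$; I expect this analysis to be mechanical but technically involved.
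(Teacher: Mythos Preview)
Your proposal takes a route different from the paper's, and while the opening moves are sound, the argument has real gaps precisely at the point where the fourth triangle intervenes. First, the assertion that the common neighbors $x_1,x_2$ of $v_{11},v_{21}$ lie in $N(A)$ is not justified: since you allow $v_{11}$ and $v_{21}$ to have neighbors in $S_4$, a common neighbor could be some $v_{4\ell}\in S_4$. Second, and more seriously, your appeal to \autoref{lemma:3.1}(\rmnum{3}) to rule out $w\in N(A)$ does not work here. In the three-triangle proof that step succeeds because $N(A)=N(S_1)\cup N(S_2)\cup N(S_3)$ and $\{x_1,y_2\}$ meets each $N(S_i)$; with four triangles, $w$ could lie in $N(S_4)$ alone while neither $x_1$ nor $y_2$ belongs to $N(S_4)$, so no edge inside any $N(S_i)$ is forced. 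Finally, for the genuinely new case $w=v_{4\ell}$ you offer only a plan (``accumulate three common neighbors \ldots mechanical but technically involved'') without a single concrete deduction. It is far from clear that simply adding the pair $z_1,z_2$ from $v_{21}v_{31}$ pins down enough adjacencies to force a $K_{2,3}$; the fourth triangle provides an escape valve at every step of the three-triangle argument, and you have not shown how to close it.

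For comparison, the paper abandons this local extension of the three-triangle argument and instead organizes the proof around the global parameter $m=\max\{|N_G(u)\cap A|:u\in N(A)\}$. After ruling out $m=1$ by a short $K_{2,3}$ count, it treats $m=2,3,4$ separately; in each case a carefully chosen pair of vertices in $V_{12}\cap V_{22}$ and $V_{32}\cap V_{42}$ (or, for $m=4$, the unique vertex with four $A$-neighbors) forces enough adjacencies to produce a forbidden $K_{2,3}$, $K_{2,4}$, or $C_4^+$. That decomposition by $m$ is what makes the four-triangle case tractable, and it is the missing idea in your outline.
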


\begin{proof}
Let  $G$  be a uniquely $C_{4}^{+}$-saturated graph  on  $n$  vertices  with  four triangles  $S_{1}, S_{2},S_{3},S_{4}$  and $A=\bigcup\limits_{i=1}^{4}V(S_{i})$  with  $V(S_{i})=\{v_{i1},v_{i2},v_{i3}\}$.  Let  $V_{ij}=N_{G}(v_{ij})-A$  for  $i\in[4]$  and  $j\in[3]$,  then $N(A)=\bigcup\limits_{i=1}^{4}\big(\bigcup\limits_{j=1}^{3}V_{ij}\big)$.  By  \autoref{theorem:3.3},  $V(G)$  has a partition $V(G)=A\cup N(A)$.
Let $m$ be the largest integer such that there is a vertex outside $A$ with $m$ neighbors in $A$.
Firstly  we  claim  $m\geq2$.  If $m=1$,  then  any vertex in $N(A)$  has exactly  one neighbor in $A$.  Since there exists at most one edge between any two triangles,   we  can get that  there are at least four \Rmnum{1}-type  $C_{4}^{+}$-saturating edges  between $S_{1}$  and  $S_{2}$.  Moreover,  the common   neighbors  of  the endpoints of these  $C_{4}^{+}$-saturating  edges  are  in $A$  since $m=1$.  But $G$  has $K_{2,3}$  as a contradiction.  We proceed by a case analysis.

\textbf{Case 1.} $m=2$.

Observe that there is at most one edge between any two triangles. Without loss of generality, either $v_{11}v_{21}$ is an edge between $S_1$ and $S_2$, or there is no edge between them. Furthermore, we can assume that from $v_{12}$ there is no edge to $S_3$ and from $v_{22}$ there is no edge to $S_4$.  These above assumptions imply that $v_{12}$ and $v_{22}$ do not have a common neighbor in $A$. Therefore, they have two common neighbors outside $A$, i.e., $V_{12}\cap V_{22}$ contains two vertices $u_1,u_2$. Analogously we can assume that $V_{32}\cap V_{42}$ contains $w_1,w_2$.  Now $N_G(u_i)\cap N_G(w_j)=\emptyset$ for any $i$ and $j$.  Otherwise, $x\in N_G(u_i)\cap N_G(w_j)$ for some $i,j$.  Then  $x\in N(A)$  since $m=2$. But $x$ has neighbors in $N(S_i)$ for each $i\le 4$, thus one of the sets $N(S_i)$ is not independent, contradicting  \autoref{lemma:3.1}.  Thus each $u_iw_j\in E(G)$. This implies that the common neighbors of $u_1$ and $u_2$ are $w_1$, $w_2$, $v_{12}$ and $v_{22}$, giving us a $K_{2,4}$ inside $G$, a contradiction.

\textbf{Case 2.} $m=3$.

Observe that the argument in Case 1 works here, giving us $u_1,u_2\in V_{12}\cap V_{22}$ and $w_1,w_2\in V_{32}\cap V_{42}$, but $u_i$ and $w_j$ may have common neighbors in $A$, namely $v_{12}$, $v_{22}$, $v_{32}$ or $v_{42}$. If two adjacent edges of the form $u_iw_j$ are in $E(G)$, then we have a $K_{2,3}$ in $G$ as in Case 1, a contradiction. Therefore,  two   parallel edges of the form $u_iw_j$ are missing from $E(G)$, without loss of generality, $u_1w_1$ and $u_2w_2$ are not in $E(G)$. Then $u_1$ and $w_1$ have two common neighbors, without loss of generality, $v_{12}$ and $v_{32}$. Then $u_2$ is not adjacent to $v_{32}$, since then $u_1$ and $u_2$ would have three common neighbors. Similarly $w_2$ is not adjacent to $v_{12}$ as then $v_{12}$ would be the third common neighbor of $w_1$ and $w_2$. Therefore, the common neighbors of $u_2$ and $w_2$ are $v_{22}$ and $v_{42}$.

Consider now a vertex $z\in V_{41}$. If $u_1z\not\in E(G)$, then $u_1$ and $z$ have two common neighbors. Since the neighbors of $u_1$ are $v_{12}$, $v_{22}$, $v_{32}$ and some vertices in $V_{41}\cup V_{42}\cup V_{43}$, the common neighbors  of $u_1$ and $z$ are among $v_{12}$, $v_{22}$ and $v_{32}$. But any two of these already have two common neighbors among $u_1,u_2,w_1,w_2$, thus with $z$ they form a $K_{2,3}$, a contradiction. Therefore, $u_1z\in E(G)$. This implies that $|V_{41}|\le 2$, since $u_1$ has two common neighbors with $v_{41}$.
Moreover, the vertices in $V_{41}$ are not adjacent to any other vertex of $A$.  Otherwise, $G$  has $C_{4}^{+}$ or bowknot graph or  house graph  as a contradiction  since the neighbor $u_{1}$ of them is adjacent to vertices in $S_1,S_2,S_3$. By symmetry, the same holds for any $V_{ij}$ with $j\neq 2$. Now recall from $v_{13}$ there is no edge to $S_2$ and from $v_{23}$ there is no edge to $S_1$. There must be two common neighbors of $v_{13}$ and $v_{23}$ in $A$.   If not,  let $x$  be a common neighbor of $v_{13}$ and $v_{23}$  outside $A$,  then $x\in V_{13}\cap V_{23}$.  From the  above  argument,  $x$  is adjacent to $w_{1}$  and  $w_{2}$,  but $G$  has a $K_{2, 3}$  with  vertex set $\{w_{1}, w_{2}, x, v_{32}, v_{42}\}$,  a contradiction.  This implies that they must be in $S_3$ and $S_4$. Consider now $v_{13}$ and $v_{22}$.  Similarly as above,  since  $G$  has no bowknot graph, there  must have two common neighbors of $v_{13}$ and $v_{22}$ inside $S_3$ and $S_4$, thus those two vertices both are adjacent to $v_{22}$ and $v_{23}$ giving us a $C_4^+$,  a contradiction.

\textbf{Case 3.} $m=4$.

Without loss of generality, $u\in V_{11}\cap V_{21}\cap V_{31}\cap V_{41}$. This implies that the neighbors of $u$ are $v_{11}$, $v_{21}$, $v_{31}$, $v_{41}$ and no other vertices  since $u$ has at most one neighbor in each triangle, and $u\in N(S)$ for each triangle $S$, thus has no neighbors in $N(S)$. Each of the other  $n-13$ vertices  in $N(A)\setminus\{u\}$ has two common neighbors with $u$.  Then  $|N(A)\setminus\{u\}|\leq 6$  since $N_{G}(u)=\{v_{11}, v_{21}, v_{31}, v_{41}\}$.
Note that $v_{i1}$ and $v_{j1}$  with $i\neq j$  have two common neighbors, including $u$.  Since $G$  has  no  $K_{2, 3}$, we have $|U|=6$ with $U=N(A)\setminus\{u\}$.  And  each vertex in $U$ has exactly  two neighbors  in  $\{v_{11}, v_{21}, v_{31}, v_{41}\}$.
Since $e[S_{i}, S_{j}]\leq1$ for any $i\neq j$  and  $G$  has no bowknot graph or  house graph,   $v_{11}$ has no neighbor in  $S_{2}\cup S_{3}\cup S_{4}$ and  $v_{21}$ has no neighbor in $S_{1}\cup S_{3}\cup S_{4}$.  Without loss of generality,  we say that
$E[S_{1}, S_{2}]=v_{13}v_{23}$ if $E[S_{1}, S_{2}]\neq\emptyset$.  Now consider  $G+v_{12}v_{21}$,  then $v_{12}$, $v_{21}$ have exactly  two common neighbors  which are the two vertices of $U$ in $N_G(v_{21})\cap N_G(v_{31})$  and $N_G(v_{21})\cap N_G(v_{41})$, respectively.  Thus $v_{12}$  has no neighbor in $S_{3}\cup S_{4}$ since  $G$  has no bowknot graph or  house graph.  Similarly we consider $G+v_{12}v_{22}$,  but $v_{12}$ and  $v_{22}$ have  at most one  common neighbor which is the vertex of $U$ in $N_G(v_{31})\cap N_G(v_{41})$,  then  $G+v_{12}v_{22}$  has no $C_{4}^{+}$,  a contradiction.
\end{proof}

\section{Concluding remarks}

The \textit{book graph with $p$ pages} $B_p$ consists of $p$ triangles sharing an edge. In other words, we take an edge $uv$ that we call the \textit{rootlet edge}, and add $p$ vertices joined to both $u$ and $v$. Then $C_4^+$ is $B_2$. Some of our results extend to  uniquely $B_p$-saturated graphs with $p\ge 3$.
Again, there are two types of edges of $B_p$, the rootlet edge and the other edges, called \textit{page edges}. A nontrivial uniquely $B_p$-saturated graph is connected, has diameter 2 and girth at most 4 by the same or even simpler arguments than in the case $p=2$ earlier.  In the case a $B_p$-saturated graph $G$ does not contain $B_{p-1}$, then adding a new edge $uv$ of $G$ creates a $B_p$, where $uv$ is the rootlet edge. In particular, $u$ and $v$  have $p$ common neighbors. Also, a uniquely $B_p$-saturated graph $G$ must be $K_{2,p+1}$-free, since then the edge inside the smaller part creates more than one copy of $B_p$. These observations help prove an analogue of Theorem \ref{theorem:3.1}.

\begin{theorem}
$G$ is a nontrivial uniquely $B_p$-saturated graph on $n$ vertices with $g(G)=4$ if and only if $G$ is a strongly regular graphs with parameters $(n,k,0,p)$ for some $k$.
\end{theorem}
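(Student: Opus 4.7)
The plan mirrors the proof of Theorem 3.1, adapting the two numerical invariants ($\lambda=0$ and $\mu=p$) to the book graph. Throughout, I exploit the observation from the preceding paragraph: since $g(G)=4$ makes $G$ triangle-free and hence $B_{p-1}$-free (any $B_{p-1}$ with $p\ge 2$ contains a triangle), every non-edge $xy$ added to $G$ creates a unique $B_p$ whose rootlet is $xy$, so $x$ and $y$ have exactly $p$ common neighbors in $G$. The analogue of Lemma 2.1 (connected, $\mathrm{diam}(G)=2$) still holds.

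For the forward direction I would first establish regularity. Fix adjacent $u,v$, with $N(v)=\{u,v_1,\dots,v_t\}$ (an independent set by triangle-freeness). Each non-edge $uv_i$ forces $u$ and $v_i$ to share exactly $p$ common neighbors; one is $v$, and the other $p-1$ lie in $N(u)\setminus\{v\}$. Symmetrically, for any $w\in N(u)\setminus\{v\}$ the pair $vw$ is a non-edge (since $N(v)$ is independent and $w\in N(u)$, so $w\notin N(v)$), so $v$ and $w$ share exactly $p$ common neighbors inside $N(v)=\{u,v_1,\dots,v_t\}$, one being $u$; hence $w$ has exactly $p-1$ neighbors among $v_1,\dots,v_t$. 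Double-counting the edges between $N(u)\setminus\{v\}$ and $\{v_1,\dots,v_t\}$ gives
\[
(p-1)(d(u)-1)=(p-1)t=(p-1)(d(v)-1),
\]
and since $p\ge 2$ we conclude $d(u)=d(v)$. Connectedness then promotes this to $k$-regularity. Combined with $\lambda=0$ (triangle-freeness) and $\mu=p$ (the rootlet property), $G$ is strongly regular with parameters $(n,k,0,p)$.

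For the converse, suppose $G$ is strongly regular with parameters $(n,k,0,p)$. Then $G$ is triangle-free, hence $B_p$-free, and for every non-edge $uv$ the $p$ common neighbors of $u$ and $v$ together with $uv$ form a $B_p$ with rootlet $uv$. To show this $B_p$ is unique in $G+uv$, I argue that any $B_p$ in $G+uv$ must use the new edge $uv$ as its rootlet: if $uv$ were instead a page edge, then the rootlet would be an edge $xy$ of $G$ with (say) $u=x$ and $v$ a page vertex adjacent to $y$, and the remaining $p-1$ pages would already witness $p-1\ge 1$ common neighbors of the adjacent pair $u,y$ in $G$, contradicting $\lambda=0$. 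So $uv$ is the rootlet, and since $|N(u)\cap N(v)|=p$ there is exactly one way to choose the $p$ page vertices.

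The only real technical content is the regularity double-count; the delicate point in the converse is ruling out $uv$ as a page edge, which collapses to the single inequality $\lambda=0<p-1$ available for $p\ge 2$. Everything else parallels the $p=2$ argument of Theorem 3.1 verbatim, so no new obstacle is anticipated.
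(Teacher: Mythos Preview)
Your proof is correct and follows essentially the same approach as the paper: both directions hinge on the observation that in a triangle-free uniquely $B_p$-saturated graph every non-edge must be the rootlet of the unique new $B_p$, giving $\mu=p$, and then regularity is obtained by double-counting edges between $N(u)\setminus\{v\}$ and $N(v)\setminus\{u\}$. Your version of the double count is in fact slightly cleaner than the paper's --- you count over all of $N(u)\setminus\{v\}$ and obtain $(p-1)(d(u)-1)=(p-1)(d(v)-1)$ directly, whereas the paper introduces an auxiliary set $W\subseteq N(u)\setminus\{v\}$, shows $|W|=t$ by the same count, and then uses the a priori assumption $d(u)\le d(v)$ to close the argument --- but the underlying idea is identical.
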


\begin{proof} First we show that $G$ is regular. Let $u$ and $v$ be adjacent vertices with $d(u)\le d(v)$ and $N(v)=\{u,v_1,v_2,\dots, v_t\}$. Then adding $uv_i$ with  $i\in[t]$ creates a $B_p$, with $uv_i$ being the rootlet edge, thus $u$ and $v_i$ have exactly $p$ common neighbors $v,w_i^1,\dots,w_i^{p-1}$. Note that we may have that $w_i^j=w_{i'}^{j'}$.
Let $W=\{w_{i}^{j}: i\in[t], j\in[p-1]\}$,  then $|W|\leq t(p-1)$.   For any $w_{i}^{j}\in W$,  since $g(G)=4$  and $G+w_{i}^{j}v$ has exactly one copy of $B_{p}$,  $w_{i}^{j}$ has exactly $p-1$  neighbors in $N(v)\backslash\{u\}=\{v_1,v_2,\dots, v_t\}$.  And for any $v_{i}$ with  $i\in[t]$,  $v_{i}$  has exactly $p-1$ neighbors  $\{w_i^1,\dots,w_i^{p-1}\}$ in $W$.  Thus by double counting, we have that $e[W, N(v)\backslash\{u\}]=|W|(p-1)=|N(v)\backslash\{u\}|(p-1)=t(p-1)$,  which means that $|W|=t$.  Then  $d(u)\geq t+1$,  proving the regularity.
The girth condition implies that adjacent vertices have no common neighbor, and we have mentioned earlier that non-adjacent vertices have exactly $p$ common neighbors.

If $G$ is a strongly regular graph with parameters $(n,k,0,p)$ for some $k$, then it is triangle-free thus $B_p$-free, but adding any edge creates a $B_p$ with the $p$ common neighbors of its endpoints. This $B_p$ is unique, since the new edge must be a rootlet edge in any new $B_p$.
\end{proof}

Note that another analogue of Theorem \ref{theorem:3.1} is to consider $B_{p-1}$-free graphs, since in those graphs we still have that any edge added can only be the rootlet edge of any copy of $B_p$. In particular, strongly regular graphs with parameters $(n,k,q,p)$, $q\le p-2$ are also uniquely $B_p$-saturated. As an example,  the complete $r$-partite graph $K_{k,\ldots,k}$ is a $B_{(r-2)k+1}$-free  uniquely $B_{(r-1)k}$-saturated graph, which is just a strongly regular graph with parameters $(rk,(r-1)k,(r-2)k,(r-1)k)$.
 Moreover, if we  have a graph where non-adjacent vertices have exactly $p$ common  neighbors and adjacent vertices have at most $p-2$ common neighbors,  it is also uniquely $B_p$-saturated by the arguments in the above proof.  For example, let us remove from a complete graph $K_{rm}$   the edge set of $r$ vertex-disjoint copies of strongly regular graphs with parameters $(m,r,\lambda,\mu)$, where $\lambda\ge \mu$. Then two non-adjacent vertices in the obtained graph $G$ have  $rm-2r+\lambda$ common neighbors. Two adjacent vertices in $G$ from the same removed strongly regular graph have  $rm-2-2r+\mu$ common neighbors in $G$. Two vertices from distinct removed graphs must be adjacent in $G$. The number of their common neighbors is  $rm-2-2r$. Therefore, $G$ is uniquely  $B_{rm-2r+\lambda}$-saturated.  Note that $G$  is not a  strongly regular graph  but is $(rm-r-1)$-regular.

Let us consider now how $C_3^*$ is generalized to our setting. Let us take a $(p-1)$-regular graph $G_0$ that is uniquely $B_{p-1}$-saturated, add a vertex $u$ joined to everything, and another vertex $v$ joined to $u$, to obtain $G$. In the case of $C_3^*$, the 1-regular graph $G_0$  is a single edge. There are at least two ways to generalize this to larger $p$: we can choose $K_p$ or $K_{p-1,p-1}$ as $G_0$. As we have seen, balanced complete $r$-partite graph $K_{k,\ldots,k}$  with $r>2$  and  $k=\frac{p-1}{r-1}$  can also be chosen as $G_0$.
Clearly, $G$ is $B_p$-free, since  $u$  is the only vertex with degree greater than $p$.  If  we add an edge $vw$, then we have that $uw$  is the rootlet edge  of $B_{p}$ and  $u$,  $w$ have common neighbors  $v$  and the $p-1$ other neighbors of $w$. If we add an edge $ww'$ with both vertices in $G_0$, then we find a unique $B_{p-1}$ in $G_0$ that is extended to a $B_p$ with $u$.

\section*{Acknowledgments}
This work is supported by the National Natural Science Foundation of China (No. 12271251), the Fundamental Research Funds for the Central Universities (No. NC2024007), and by National Research, Development and Innovation Office - NKFIH under the grant KKP-133819.


\begin{thebibliography}{99}



\bibitem{A.E.Brouwer2012}   A.E. Brouwer,  W.H.  Haemers,     Spectra of graphs,   Springer,  New York,  2012.


\bibitem{A.Bondy2008} A. Bondy,  M.R. Murty,  Graph Theory,  Springer-Verlag,  London,  2008.


\bibitem{N.L.Biggs2009}   N.L. Biggs,  Strongly regular graphs with no triangles,  (2009)  arXiv:0911.2160v1.


\bibitem{L.W.Berman2016}   L.W. Berman, G.E. Chappell, J.R. Faudree, J. Gimbel,  C. Hartman,  Uniquely tree-saturated graphs, Graphs Combin.  \textbf{32}   (2016)  463--494.


\bibitem{B.L.Currie2021}   B.L. Currie,   J.R. Faudree,  R.J. Faudree,  J.R. Schmitt,  A survey of minimum saturation graphs,   Electron. J. Combin.  \textbf{18}  (2021)  \#DS19.


\bibitem{J.Cooper2012}  J. Cooper,   J. Lenz,  T.D.  LeSaulnier,   P.S.  Wenger,  D.B.  West,  Uniquely  $C_{4}$-saturated graphs, Graphs Combin.  \textbf{28}  (2012)  189--197.


\bibitem{P.Erdos1964} P. Erd\H{o}s, A. Hajnal, J. Moon,  A problem in graph theory,  Amer. Math. Monthly  \textbf{71}  (1964) 1107--1110.


\bibitem{S.G.Hartke2012}  S.G.  Hartke,  D.  Stolee,  Uniquely $K_{r}$-saturated graphs,  Electron. J. Combin.  \textbf{19}  (2012) \#P6.


\bibitem{C.Godsil1995}    C. Godsil, Problems in algebraic combinatorics,  Electron. J. Combin.    \textbf{2}  (1995)  \#F1.


\bibitem{A.Gyarfas2018} A. Gy\'{a}rf\'{a}s,  S.G. Hartkey,   C. Viss,   Uniquely $K_{r}^{(k)}$-saturated Hypergraphs,   Electron. J. Combin.  \textbf{25}  (2018)  \#P4.35.


\bibitem{W.Mantel1907}  W. Mantel, Problem 28, soln. by H. Gouwentak, W. Mantel, J. Teixeira de Mattes, F. Schuh and W.A. Wythoff, Wiskundige Opgaven  \textbf{10}  (1907) 60--61.


\bibitem{A.Razborov2022}   A. Razborov,  An extremal problem motivated by triangle-free  strongly regular graphs,  J. Combin. Theory Ser. B   \textbf{155} (2022) 52--82.


\bibitem{P.Turan1941} P. Tur\'{a}n,  Eine Extremalaufgabe aus der Graphentheorie, Mat Fiz Lapok \textbf{48}   (1941) 436--452.


\bibitem{P.S.Wenger2016}    P.S. Wenger,  D.B. West,  Uniquely cycle-saturated graphs,  J. Graph Theory  \textbf{85}  (2016) 94--106.


\bibitem{P.S.Wenger2010} P.S.  Wenger,  Three existence problems in extremal graph theory, ProQuest LLC, Ann Arbor, MI, 2010. Thesis
    (Ph.D.)-University of Illinois at Urbana-Champaign.


\end{thebibliography}
\end{document}